\documentclass[a4paper]{article}

\usepackage{algorithmic}
\usepackage{algorithm}
\usepackage{amsmath}
\usepackage{amsfonts}
\usepackage{amssymb}
\usepackage{amsthm}
\usepackage{tikz}
\usepackage{pgf}
\usepackage{pgfplots,  pgfplotstable}
\pgfplotsset{compat=1.5}
\usepgfplotslibrary{external}
\usepgfplotslibrary{statistics}
\usetikzlibrary{decorations.pathreplacing}
\tikzexternalize
\tikzexternalize[shell escape=-enable-write18]

\usepackage[utf8]{inputenc}
\usetikzlibrary{shapes,decorations,patterns}
\usepackage{siunitx} 

\usepackage{tabularx}
\usepackage{graphicx}
\usepackage{multirow}
\usepackage{booktabs}
\usepackage{url}

\newtheorem{theorem}{Theorem}

\newtheorem{lemma}{Lemma}

\newtheorem{definition}{Definition}
\newtheorem{corollary}{Corollary}
\usepackage{comment}
\usepackage[hidelinks]{hyperref} 
\usepackage[a4paper,includeheadfoot,margin=2.54cm]{geometry}
\title{ODDO: Online Duality-Driven Optimization}
\author{Martijn H. H. Schoot Uiterkamp, Marco E. T. Gerards, Johann L. Hurink \\ University of Twente, Enschede, the Netherlands}

\usepackage{subcaption}

\newtheorem*{lemma_tight}{Lemma~\ref{lemma_tight}}
\newtheorem*{lemma_prop_2}{Lemma~\ref{lemma_prop_2}}

\newcommand{\prob}{P}

\allowdisplaybreaks

\begin{document}
\maketitle

\begin{abstract}
Motivated by energy management for micro-grids, we study convex optimization problems with uncertainty in the objective function and sequential decision making. To solve these problems, we propose a new framework called ``Online Duality-Driven Optimization'' (ODDO). This framework distinguishes itself from existing paradigms for optimization under uncertainty in its efficiency, simplicity, and ability to solve problems without any quantitative assumptions on the uncertain data. The key idea in this framework is that we predict, instead of the actual uncertain data, the optimal Lagrange multipliers. Subsequently, we use these predictions to construct an online primal solution by exploiting strong duality of the problem. We show that the framework is robust against prediction errors in the optimal Lagrange multipliers both theoretically and in practice. In fact, evaluations of the framework on problems with both real and randomly generated input data show that ODDO can achieve near-optimal online solutions, even when we use only elementary statistics to predict the optimal Lagrange multipliers.
\end{abstract}

\section{Introduction}

\subsection{Optimization under uncertainty in micro-grids}
Dealing with uncertainty plays an important role in modern decision making. As a consequence, many paradigms have been developed for optimization under data uncertainty (we discuss several of them later in this section). Each of these paradigms has its own pros and cons, and it depends strongly on the given application which of them is most suitable. In this article, we consider as leading example an application of optimization under uncertainty in so-called \emph{micro-grids}. Micro-grids are branches of the main distribution grid that aim to be as self-sustainable as possible and to this end aim to minimize the import of energy from the main grid. These micro-grids play an important role in the ongoing world-wide energy transition, where two trends can be observed in residential energy production and consumption that threaten the proper operation and security of the local energy grid. First, more and more of the required energy for the local grid is also generated locally from renewable sources, e.g., solar power. Second, there is an increase in energy demand due to an increasing electrification of devices such as electric vehicles (EVs) and heat pumps. Together, this means that both the demand and supply of energy within the local grid increases, which leads to more stress on these grids. In fact, field tests show that even a small number of EVs can cause serious damage to the grid when their charging is not being coordinated \cite{Hoogsteen2017b}.

One way to overcome this problem would be to simply increase the capacity of the grid by, e.g., installing more or thicker cables. However, this is an extremely costly operation (see, e.g., \cite{Haucap2013}), since this increase of capacity is mainly needed for accommodating sporadic peaks in demand and supply, which leads to a lower utilization rate. A different and less costly way is to treat the local energy system as a micro-grid where the energy flows are actively managed, thereby reducing, among others, the peaks in demand and supply. Within the micro-grid, one can exploit the flexibility of devices such as EVs to compensate for consumption and production peaks by adjusting their consumption if desired. This type of energy management is called \emph{demand side management} (DSM). To exploit the device flexibility, many DSM approaches compute coordinated energy consumption schedules for all steerable devices such that the resulting aggregated energy profile (e.g., on the neighborhood level) is as flat as possible \cite{Esther2016}.

Computing these schedules requires information on energy production and consumption in the upcoming time period, i.e., data that is unknown at the current time. In many DSM approaches, this data is simply predicted and a deterministic version of the scheduling problem at hand is considered based on this predicted data \cite{Barbato2014}. However, in practice, predicting especially the base load consumption of all non-steerable devices is hard since it is to some extend equivalent to the prediction of human behavior on a small scale (see, e.g., \cite{Javed2012}). For this reason, current research directions in this area consider the development of advanced prediction approaches \cite{Hong2016} and the application of existing frameworks for optimization under data uncertainty (see, e.g., \cite{Ma2018}).


\subsection{Existing paradigms for optimization under uncertainty}
\label{sec_uncertain}

The optimization problems considered in this article are concerned with sequential (or, alternatively, online or multistage) decision making under uncertainty. There exists a vast literature on such optimization problems (see \cite{Bakker2020} for a survey). Here, we briefly discuss three of the most prominent paradigms for solving these problems and their advantages and disadvantages, namely (multistage) stochastic programming, (adjustable) robust optimization, and online convex optimization.

Stochastic programming (SP) is used to solve optimization problems where the uncertain data is assumed to be stochastic under a given joint probability distribution. The goal is to design a policy that maximizes the expected outcome of some function on the decision variables and uncertain data under the constraint that all or almost all constraints are fulfilled for all considered data outcomes \cite{Shapiro2009}. In particular, in a \emph{multi-stage} SP, the aim is to iteratively derive a policy for a single (the current) stage and a set of policies for the future stages for each outcome of the stochastic data of the current stage.

The advantage of SP is that all information on the probability distribution is taken into account in the decision making, which leads to more accurate results. However, a major drawback of SP is that knowledge of the distribution underlying the data is not always available and cannot always be estimated well from historical data, e.g., due to the data not being stationary. Moreover, multi-stage SPs are generally intractable \cite{Dyer2006} and, although several approaches exist to compute approximate solutions (see, e.g., \cite{Shapiro2005}), the exponential growth of the dimension of the to-be-solved SP problem is a limiting factor for applications where fast computations are crucial.

A different approach is taken in robust optimization (RO), which, in contrast to SP, is a solution paradigm that does not assume any probability distribution on the uncertain data. Instead, the essential assumption in RO is that one can construct a so-called \emph{uncertainty set} that contains all possible or likely realizations of the uncertain data \cite{Ben-Tal2009}. Given such an uncertainty set, the goal is to compute the best worst-case solution to the problem, i.e., the best solution that is feasible for any realization of the data within the uncertainty set. A variant of RO for sequential or online decision making is \emph{adjustable} RO \cite{Yankoglu2019}. Here, one distinguishes between here-and-now variables, which have to be decided at the current stage, and wait-and-see variables, which can or must be decided in later stages.

The major advantage of RO and adjustable RO is that any quantitative information on the uncertain data can be taken into account when constructing the uncertainty set. Moreover, the decision maker can control the level of conservatism involved in the process of determining a solution by adjusting the uncertainty set. However, a disadvantage of RO is that its solutions can be too conservative since not all realizations within the uncertainty set are equally likely to occur. Indeed, it is known that a careful design of the uncertainty set is crucial for the succes of RO for solving real-life problems \cite{Gorissen2015}. Another disadvantage is that solving a RO-model is in general much harder than solving the original problem with deterministic data and greatly increases the size of the problem at hand \cite{Ben-Tal2015}.

A third considered paradigm is online convex optimization (OCO). In the OCO-framework \cite{Shalev-Shwartz2012}, the goal is to solve convex optimization problems with objective uncertainty by learning the behavior of the objective function ``on the go". At each stage, the then available information on the objective function is used to determine the decision for this stage. Subsequently, the part of the objective function corresponding to this stage is revealed and the resulting cost is incurred. 

The major advantage of this framework is that it requires no assumptions on the uncertainty in the objective function other than that this function is convex. Moreover, it provides performance guarantees in terms of both objective value and constraint violations \cite{Mahdavi2012}. However, some of these guarantees require that the optimization horizon is extremely large \cite{Jenatton2016}, thus rendering them not informative in practice. Moreover, the practical performance of OCO heavily depends on estimates of several properties of the uncertain objective function such as its Lipschitz constant. Such an estimate depends on the actual uncertain data underlying the objective function and thus requires knowledge on this data itself. In any case, the classical OCO-framework does not consider the availability of further information (e.g., obtained from measurements) on the part of the objective function corresponding to the current stage. It is only very recently that OCO-models are being studied with so-called \emph{1-step look-ahead} features, where such measurements are (partly) available (see, e.g., \cite{Ho-Nguyen2019}).

\subsection{Our contribution}
\label{sec_contribute}

In this article, we focus on a class of constrained convex optimization problems with uncertainty in the objective function. In these problems, the optimization horizon is divided into stages  and we associate with each stage a vector of decision variables and an uncertain cost function. At the start of a given stage, this uncertain cost function is revealed and the decision variables corresponding to this stage have to be decided. Depending on the application, this revelation can be based on, e.g., a measurement or (very) short-term forecast of the cost function. The goal is to obtain in this way a feasible solution to the problem whose objective value is close to the optimum of the deterministic version of the optimization problem. We call this decision making problem the \emph{online} version or setting of the optimization problem. Note that, when deciding about the variables for a given stage, the uncertain cost functions for the future stages should be taken into consideration since the constraints may involve variables of different stages.

The scheduling problems in energy management mentioned in the previous section fall into this problem class (see, e.g., \cite{vdKlauw2017}) and we come back to these problems in more detail in Section~\ref{sec_eval}. Moreover, there is quite some research that focuses especially on problems with uncertainty only in the objective function. Examples of such problems can be found in the fields of linear programming \cite{DeAngelis1979}, network flow problems \cite{Bertsimas2003}, distributed optimization \cite{Li2018}, and multi-objective optimization \cite{Ide2016}. Examples of applications besides energy management are task scheduling on processors \cite{Gerards2016aa}, production and inventory management \cite{Mula2006}, and speed optimization for truck and ship routing \cite{Hvattum2013}.

We propose a new framework for solving optimization problems with uncertain objective functions witin this class that overcomes the aforementioned disadvantages of SP, RO, and OCO. We call this framework ``Online Duality-Driven Optimization'' (ODDO). The key idea in ODDO is that we can characterize the optimal decision vector for a given stage using the optimal Lagrange multipliers of the deterministic optimization problem and the parameters corresponding to the given stage. In particular, for this characterization, we require no information on cost functions other than that of the given stage. As a consequence, there is no need to predict these future cost functions and/or the uncertain data underlying these functions. Instead, we only require predictions of the optimal Lagrange multipliers of the deterministic version of the problem. This is beneficial in problems where the dimension of the dual space, i.e., the number of constraints, is low compared to the dimension of the uncertain data since it reduces the number of values that need to be predicted. Furthermore, we argue that this approach is robust against prediction errors in the Lagrange multipliers. In particular, for the special class of separable optimization problems with submodular constraints (see, e.g., \cite{Hochbaum1995,Fujishige2005}), we derive several bounds on the difference in objective value between our online solution and the offline optimal solution.

The idea of predicting optimal Lagrange multipliers shares similarities with active set learning for continuous and mixed-integer optimization \cite{Misra2019,Bertsimas2020}. These works solve optimization problems by predicting the set of constraints that is active at the optimal solution. Subsequently, they use this set to reduce the original problem to a convex equality-constrainted optimization problem that, depending on the linearity of the constraints, might be easier to solve. Active set learning and ODDO are similar in the sense that both approaches predict sets of values (active constraints or Lagrange multipliers) from which an optimal solution can be easily reconstructed. In fact, since the values of optimal Lagrange multipliers directly imply the set of active constraints by complementary slackness (see, e.g., \cite{Boyd2004}), active set learning is able to reconstruct an optimal solution with less information than ODDO. However, solving the aforementioned reduced equality-constrained problem requires all uncertain cost functions to be known a priori. As a consequence, the approaches in \cite{Misra2019,Bertsimas2020} are only able to solve offline optimization problems and cannot be used to solve sequential optimization problems where uncertain data is revealed over time. In our approach, however, we reduce the original problem to a collection of smaller problems, one for each stage, that each can be solved without using input data that has not yet been revealed. This means that given a prediction of the optimal Lagrange multipliers, we can compute a solution for a given stage using this prediction and only data that has already been revealed.

It is worth mentioning that, in energy management, several specific optimization problems under uncertainty are already being solved by similar approaches, be it under a different name. For example, in the case of EV charging, this approach is known as \emph{online valley filling} \cite{Chen2014,Mou2015,Gerards2016a} and the single Lagrange multiplier that is predicted is called the \emph{fill-level}. Our approach generalizes these approaches to a setting with convex objective function and constraints.

Our approach has several advantages compared to the solution paradigms mentioned in Section~\ref{sec_uncertain}:
\begin{enumerate}
\item
We do not pose any quantitative assumptions on the uncertain data in terms of support, underlying probability distribution, or uncertainty set. Instead, we only assume that the data possesses some structure, meaning that the data is similar for, e.g., consecutive problem instances.
\item
A disadvantage of, e.g., SP and RO is the large increase in complexity of the problems that eventually have to be solved, e.g., the robust counterpart in RO. In our approach, however, at each stage, one has to solve at most the original problem with a slightly different, but still convex, objective function. The construction of this new objective function consists of adding a linear term to the objective. Thus, the online problem can be solved using the same optimization methodology that is used to solve the deterministic problem.
\item
Our approach does not only work for the case where underlying data is uncertain, but also when the structure of the cost functions (e.g., quadratic, exponential) is uncertain. The only information that we require of these functions is that they are convex and continuously differentiable on a given compact support set.
\item
The performance guarantee that we derive is independent of the number of stages and only depends on the prediction errors in the optimal Lagrange multipliers. As a consequence, as opposed to some works on OCO, this guarantee holds for any number of stages and makes our approach successfully applicable also to problems with only a small number of stages.
\item
The derivation and validation of our approach requires only well-known optimality and stability results from convex programming. This means that the approach is relatively easy to apply by practitioners.
\end{enumerate}

For the envisioned application domain of our framework, namely energy management, these advantages are relevant in the following way:
\begin{enumerate}
\item
We circumvent the problem of having to predict energy consumption data of individual households, which is in practice very difficult \cite{Javed2012} and already an entire discipline on its own \cite{Hong2016}.
\item
The limited increase in complexity of the online problem compared to the deterministic problem is relevant because the computation of device schedules within a micro-grid often has to be done on embedded systems with relatively low computational power \cite{Beaudin2015}.
\item
Dpending on new information received by the measured energy consumption data within the houses of the micro-grid, one may decide to change the operational mode, e.g., to reduce the peak consumption of a particular group of houses.
\item
The optimization horizon in micro-grid management typically consists of only a few days that are divided into time slots of 10 or 15 minutes (see, e.g., \cite{Markle-Hus2018}). This means that the total number of stages is only a few hundreds, whereas some of the performance guarantees and bounds in OCO, e.g., in \cite{Mahdavi2012} require that the number of stages is in the order of tens of millions. 
\item
The relative simplicity of our approach is relevant since energy management is a multi-disciplinary research area with many experts from engineering that might not have the time and/or mathematical knowledge to delve into the rather technical literature of, e.g., SP, RO, and OCO. 
\end{enumerate}

We evaluate the performance of the ODDO-framework by applying it to two types of problems. The first problem is a scheduling problem from the area of energy management, namely the scheduling of a large neighborhood battery such that the net peak consumption of the neighborhood is minimized. The second problem is an inventory management problem studied in \cite{Ben-Tal2004} that was used to introduce and demonstrate the concept of adjustable RO. Simulation results show that in both cases, ODDO is able to achieve near-optimal online solutions and that these solutions can be achieved using easy-to-compute predictions for the optimal Lagrange multipliers. Moreover, the results indicate that ODDO is able to significantly outperform optimization over nominal or expected values, i.e., sample average approximation.

The remainder of this article is organized as follows. In Section~\ref{sec_frame}, we formulate the studied class of optimization problems and present the ODDO-framework for solving these problems in an online setting. In Section~\ref{sec_rob}, we analyze the theoretical performance of the ODDO-framework and prove several results regarding its robustness to prediction errors in its optimal Lagrange multipliers. In Section~\ref{sec_eval}, we evaluate the performance of the ODDO-framework and in Section~\ref{sec_disc}, we discuss several limitations and possible extensions of the framework and suggestions for future research. Finally, Section~\ref{sec_concl} contains some concluding remarks.

\section{The ODDO-framework}
\label{sec_frame}
In this section, we present the ODDO-framework for solving Problem~\prob\ in an online setting. We start this section by formulating the studied optimization problem in Section~\ref{sec_problem} and formalizing the online decision making process. Subsequently, we revisit in Section~\ref{sec_Lagrange} several basic results from Lagrangian duality theory. In Section~\ref{sec_approach}, we present our approach. Note, that we postpone the motivation for and robustness analysis of the ODDO-framework to Section~\ref{sec_rob}. Finally, in Section~\ref{sec_toy}, we provide an illustrational example of the ODDO-framework.

\subsection{Problem formulation}
\label{sec_problem}

We consider a finite horizon consisting of $T$ stages indexed by the set $\mathcal{T} := \lbrace 1,\ldots,T \rbrace$. With each stage $t \in \mathcal{T}$, we associate an index set~$\mathcal{N}^t$ of size $N^t$ and a decision vector $x^t := (x^t_i)_{i \in \mathcal{N}^t}$ of dimension $N^t$. Furthermore, for each stage $t \in \mathcal{T}$, a continuously differentiable and strictly convex cost function $f^t (x^t)$ is given. The objective is to minimize the sum of the cost functions for each stage. We consider two types of constraints. First, we impose for each stage~$t$ that $x^t$ has to be chosen from a compact convex set $\mathcal{C}^t \subset \mathbb{R}^{N^t}$. Second, we consider both equality and inequality constraints that are separable over the stages, i.e., that are of the form
\begin{align*}
\sum_{t \in \mathcal{T}} g^t_j (x^t) &\leq 0, \quad j \in \mathcal{M}, \\
\sum_{t \in \mathcal{T}} h^t_k (x^t) &= 0, \quad k \in \mathcal{L}.
\end{align*}
Here, we assume that each function $g^t_j$ is convex and continuously differentiable and each function $h^t_k$ is affine. Summarizing, we study the following convex optimization problem:
\begin{subequations}
\begin{align}
\text{\prob} \ : \ \min_{x^1,\ldots,x^T} \ & \sum_{t \in \mathcal{T}} f^t (x^t) \label{P_01}\\
\text{s.t. } & \sum_{t \in \mathcal{T}} g^t_j (x^t) \leq 0, \quad j \in \mathcal{M}, \label{P_02} \\
& \sum_{t \in \mathcal{T}} h^t_k (x^t) = 0, \quad k \in \mathcal{L}, \label{P_03}\\
& x^t \in \mathcal{C}^t, \quad t \in \mathcal{T} . \label{P_04}
\end{align} \label{P}%
\end{subequations}
In particular, we study Problem~\prob\ in the online setting, i.e., the setting wherein the cost functions $f^t$ are uncertain. In this online version, the nature of the uncertainty can be related to either uncertain input parameters, e.g., uncertain coefficients of a given polynomial, or an uncertain function type, e.g., polynomial or exponential. In any case, we assume that the uncertainty is such that $f^t$ remains convex and continuously differentiable for any possible outcome of the uncertain data underlying the function or any possible structure of the function. Most importantly, however, \emph{we do not assume any quantitative knowledge of the uncertainty}. This includes knowledge in terms of, e.g., types of cost functions, probability distributions, and uncertainty sets. 

The decision making process in the online setting is as follows. At each stage $t \in \mathcal{T}$, the corresponding cost function $f^t$ is revealed and the decision maker must decide on the corresponding decision vector $x^t$. The goal is to obtain a good online solution $\hat{x} := (\hat{x}^1,\ldots,\hat{x}^T)$. In this context, ``good'' means that the objective value of the solution is close to the optimum of the deterministic version of Problem~\prob. Note that the presence of the constraints~(\ref{P_02}) and~(\ref{P_03}) implies that for deciding on $x^t$, the uncertainty for the remaining stages $t+1,\ldots,T$, i.e., the uncertain cost functions $f^{t+1},\ldots,f^T$, should be taken into consideration.

\subsection{Lagrangian duality revisited}
\label{sec_Lagrange}

For formulating the dual problems related to Problem~\prob, we introduce the Lagrange multipliers $\mu := (\mu_j)_{j \in \mathcal{M}}$ and $\lambda := (\lambda_k)_{k \in \mathcal{L}}$ that correspond to Constraints~(\ref{P_02}) and~(\ref{P_03}) respectively. For convenience in later sections, we assume that both $\mu$ and $\lambda$ are row vectors. Furthermore, we denote the concatenation of the two vectors $\mu$ and $\lambda$ by $(\mu,\lambda)$, meaning that $(\mu,\lambda)$ is a vector of dimension $|\mathcal{M}| + |\mathcal{L}|$. Using $\mu$ and $\lambda$, the Lagrangian $L(x^1,\ldots,x^T,\mu,\lambda)$ of Problem~\prob\ is given by
\begin{equation}
L(x^1,\ldots,x^T,\mu,\lambda) := \sum_{t \in \mathcal{T}} f^t (x^t) + \sum_{j \in \mathcal{M}} \mu_j \sum_{t \in \mathcal{T}} g^t_j(x^t) + \sum_{k \in \mathcal{L}} \lambda_k \sum_{t \in \mathcal{T}} h^t_k (x^t).
\label{eq_Lagrangian}
\end{equation}
The corresponding Lagrangian dual function $q(\mu,\lambda)$ is
\begin{equation*}
q(\mu,\lambda) := \min_{x^1,\ldots, x^T} \left(  L(x^1,\ldots,x^T,\mu,\lambda) \ | \  x^t \in \mathcal{C}^t, \ t \in \mathcal{T} \right) .
\end{equation*}
Note that for any Lagrange multiplier vector $(\mu,\lambda)$, the solution to the inner optimization problem of the Lagrangian dual function $q(\mu,\lambda)$ is unique since each cost function~$f^t$ is strictly convex. We denote this unique solution by the vector $x(\mu,\lambda) := (x^t(\mu,\lambda))_{t \in \mathcal{T}}$ and call this vector the \emph{Lagrangian solution} to $q(\mu,\lambda)$.

Throughout this article, we assume that there exists Lagrange multipliers $(\mu^*,\lambda^*)$ such that the optimal solution $x(\mu^*,\lambda^*)$ to the Lagrangian dual function $q(\mu^*,\lambda^*)$ is optimal for the original primal Problem~\prob. We call these multipliers \emph{optimal} for Problem~\prob. In the case of Problem~\prob, the existence of such optimal Lagrange multipliers is equivalent to Problem~\prob\ satisfying strong duality, which can be achieved through a relatively basic constraint qualification such as Slater's condition \cite{Boyd2004}. Moreover, note that the optimal multipliers $(\mu^*,\lambda^*)$ are unique if and only if the optimal solution to Problem~\prob\ satisfies the so-called Linear Independence Constraint Qualification \cite{Wachsmuth2013}. However, to simplify the discussion and without loss of generality, we assume in the derivation of our approach that the optimal multipliers $(\mu^*,\lambda^*)$ are unique.

Many solution approaches for convex optimization problems exploit strong duality, i.e., they iteratively evaluate the Lagrangian dual function for a guess of the Lagrange multipliers and update this guess based on the resulting function value. The reason for this is that often the special structure of the Lagrangian dual function can be exploited to evaluate it efficiently for given Lagrange multipliers $(\mu,\lambda)$. A well-known example of this is the classical convex resource allocation problem (RAP) given by (see also~\cite{Patriksson2008})
\begin{subequations}
\begin{align}
\text{RAP} \ : \ \min_{x \in \mathbb{R}^n} \ & \sum_{i=1}^n f^i (x^i) \label{RAP_01} \\
\text{s.t. } & \sum_{i=1}^n x^i = R, \label{RAP_02} \\
& l^i \leq x^i \leq u^i, \quad i \in \lbrace 1,\ldots,n \rbrace , \label{RAP_03}
\end{align} \label{RAP}%
\end{subequations}
where $R \in \mathbb{R}$ and $l,u \in \mathbb{R}^n$. Given the single Lagrange multiplier $\lambda \in \mathbb{R}$ corresponding to the resource constraint~(\ref{RAP_02}), the Lagrangian dual function $q_{\text{RAP}} (\lambda)$ of RAP is given by
\begin{equation*}
q_{\text{RAP}} (\lambda) := \min_{x \in \mathbb{R}^n} \left( \sum_{i=1}^n f^i (x^i) + \lambda \left(\sum_{i=1}^n x_i - R \right) \ \middle| \  l^i \leq x^i \leq u^i, \ i \in \lbrace ,\ldots,n \rbrace \right) ,
\end{equation*}
and the optimal solution $x (\lambda)$ to the inner optimization problem of $q_{\text{RAP}} (\lambda)$ is given by
\begin{equation}
x^i (\lambda) = \left\{
\begin{array}{ll}
l^i & \text{if } (\nabla f^i)^{-1}(-\lambda) < l^i, \\
(\nabla f^i)^{-1}(-\lambda) & \text{if } l^i \leq (\nabla f^i)^{-1}(-\lambda) \leq u^i, \\
u^i & \text{if } (\nabla f^i )^{-1}(-\lambda) > u^i,
\end{array}
\right. 
\end{equation}
where $\nabla f^i$ denotes the gradient of $f^i$. The state-of-the-art algorithms for solving RAP iteratively update a guess $\hat{\lambda}$ for the optimal multiplier $\lambda^*$ using the optimal solution to $q_{\text{RAP}}(\hat{\lambda})$ and a combination of binary and bisection search on the interval of possible values for $\lambda^*$ (see, e.g., \cite{Patriksson2015}).

For many optimization problems occurring in practice, the Lagrangian is separable in the primal variables. This separability allows us to decompose the optimization problem within the Lagrangian dual function into smaller subproblems whose optimal solutions together form the optimal solution to the whole problem. Since the Lagrangian $L(x^1,\ldots, x^T,\mu,\lambda)$ of Problem~\prob\ is indeed separable in $x^1,\ldots, x^T$, we can do this also for Problem~\prob. More precisely, we can define for each $t \in \mathcal{T}$ the \emph{local} Lagrangian $L^t (x^t,\mu,\lambda)$ and \emph{local} Lagrangian dual function $q^t(x^t,\mu,\lambda)$ by:
\begin{equation*}
L^t (x^t,\mu,\lambda) := f^t (x^t) + \sum_{j \in \mathcal{M}} \mu_j g^t_j (x^t) + \sum_{k \in \mathcal{L}} \lambda_k h^t_k (x^t)
\end{equation*}
and
\begin{align*}
q^t(\mu,\lambda) := \min_{x^t} \left(  L(x^t,\mu,\lambda) \ | \  x^t \in \mathcal{C}^t \right) .
\end{align*}
Note that solving the inner problem of $q(\mu,\lambda)$ is equivalent to solving the inner problems of $q^t(\mu,\lambda)$ for each $t \in \mathcal{T}$. In other words, for each $t \in \mathcal{T}$, the solution to the inner problem of $q^t(\mu,\lambda)$ is equal to the $t^{\text{th}}$ component of the solution to the inner problem of $q(\mu,\lambda)$, namely $x^t(\mu,\lambda)$. We call this solution the \emph{local Lagrangian solution} corresponding to stage~$t$. 

In many applications of Lagrangian duality theory, this separability of the Lagrangian is exploited to allow for parallel computation of the local Lagrangian dual functions $q^t(\mu,\lambda)$ (see, e.g., \cite{Bertsekas1997}). However, we exploit the separability for a different purpose, which we explain in the next subsection.

\subsection{Solution approach}
\label{sec_approach}

The main idea behind our solution approach is as follows. Suppose we are at the start of stage $\bar{t}$ and the corresponding cost function $f^{\bar{t}}$ has been revealed. If we know the optimal Lagrange multipliers $(\mu^*,\lambda^*)$, we can compute the optimal stage decision vector $(x^*)^{\bar{t}}$ by solving the inner problem of $q^{\bar{t}}(\mu^*,\lambda^*)$. This means that we can do this --- and this is the key observation in this article --- without any knowledge of the future cost functions $f^{\bar{t}+1},\ldots,f^T$. As a consequence, the only required data that is uncertain in the computation of the optimal solution vector $(x^*)^{\bar{t}}$ is the vector of optimal Lagrange multipliers $(\mu^*,\lambda^*)$ (recall, that the current cost function~$f^{\bar{t}}$ has already been revealed). Thus, if we have a prediction $(\hat{\mu},\hat{\lambda})$ for $(\mu^*,\lambda^*)$, we can obtain an approximation or \emph{online solution} $\hat{x}^{\bar{t}}$ to the optimal solution vector $(x^*)^{\bar{t}}$ by solving the inner problem of $q^{\bar{t}}(\hat{\mu},\hat{\lambda})$.


One important question is whether the resulting online solution $\hat{x} := (\hat{x}^t)_{t \in \mathcal{T}}$ is feasible. Note that when the optimal Lagrange multipliers $\mu^*$ and $\lambda^*$ are known, we have the guarantee that the solution to the inner problem of the Lagrangian dual function $q(\mu^*,\lambda^*)$ (or, equivalently, the solution obtained by solving the local Lagrangian dual functions $q^t(\mu^*,\lambda^*)$) is feasible for Constraints~(\ref{P_02}) and~(\ref{P_03}). This is true although these constraints are not enforced explicitly in the Lagrangian dual function. However, when we replace these variables by predictions $\hat{\mu}$ and $\hat{\lambda}$, i.e., when we use the solutions of the inner problems of $q(\hat{\mu},\hat{\lambda})$, we do not have this guarantee anymore. Thus, we must find a different way to ensure that the online solution $\hat{x}^t$ for a given stage~$t \in \mathcal{T}$ is chosen such that there always exists a feasible extension of this solution for the future stages.

We discuss two approaches for this problem. The first approach is to simply add all original constraints to the formulation of the inner problem of $q^{\bar{t}}(\hat{\mu},\hat{\lambda})$, substitute the online solution corresponding to the previous stages in $\lbrace 1,\ldots, \bar{t} - 1 \rbrace$, and solve this adjusted optimization problem instead. This adjusted problem, which we denote by~$(P^{\bar{t}}(\hat{\mu},\hat{\lambda}))$, can be formulated as follows:
\begin{align*}
(P^{\bar{t}}(\hat{\mu},\hat{\lambda})) \ : \ \min_{x^{\bar{t}},\ldots,x^T} \ &  f^{\bar{t}} (x^{\bar{t}}) + \sum_{j \in \mathcal{M}} \hat{\mu}_j g^{\bar{t}}_j (x^{\bar{t}}) + \sum_{k \in \mathcal{L}} \hat{\lambda}_k h^{\bar{t}}_k (x^{\bar{t}}) \\
\text{s.t. } & \sum_{t=1}^{\bar{t}-1} g^t_j (\hat{x}^t) 
+ \sum_{t=\bar{t}}^T g^t_j (x^t) \leq 0, \quad j \in \mathcal{M}, \\
& \sum_{t=1}^{\bar{t}-1} h^t_k (\hat{x}^t) 
+ \sum_{t=\bar{t}}^T h^t_k (x^t) = 0, \quad k \in \mathcal{L}, \\
& x^t \in \mathcal{C}^t, \quad t \in \lbrace \bar{t},\ldots, T \rbrace .
\end{align*}
Observe that this problem is an instance of Problem~\prob\ with horizon $\lbrace \bar{t},\ldots,T \rbrace$ and without the unknown cost functions $f^t$ for $t > \bar{t}$. This suggests that solving this adjusted problem is as easy as solving the deterministic version of the original Problem~\prob. In fact, one could even use the same, possibly tailored, solution methodology, provided this methodology does not require all cost functions to be \emph{strictly} convex.

A second approach, which leads to the same solution but is sometimes easier to execute, is to construct the projection of the feasible set onto the current decision vector and adjust the set $\mathcal{C}^{\bar{t}}$ accordingly. This means that we construct an alternative feasible set $\mathcal{C}^{\bar{t}}_{\text{proj}} \subset \mathbb{R}^{N^{\bar{t}}}$ for $x^{\bar{t}}$ such that choosing $\hat{x}^{\bar{t}}$ from this set ensures that there still exists a feasible solution for the future stages. More precisely, we solve the following problem:
\begin{align*}
(P^{\bar{t}}_{\text{proj}}(\hat{\mu},\hat{\lambda})) \ : \ \min_{x^{\bar{t}}} \ & f^{\bar{t}} (x^{\bar{t}}) + \sum_{j \in \mathcal{M}} \hat{\mu}_j g^{\bar{t}}_j (x^{\bar{t}}) + \sum_{k \in \mathcal{L}} \hat{\lambda}_k h^{\bar{t}}_k (x^{\bar{t}}) \\
\text{s.t. }& x^{\bar{t}} \in \mathcal{C}^{\bar{t}}_{\text{proj}},
\end{align*}
where
\begin{equation*}
\mathcal{C}^{\bar{t}}_{\text{proj}} := \lbrace x^{\bar{t}} \ | \
\text{ there exists } (x^{\bar{t}+1},\ldots,x^T) \text{ such that } (\hat{x}^1,\ldots,\hat{x}^{\bar{t}-1},x^{\bar{t}},\ldots,x^T) \text{ is feasible for P} \rbrace.
\end{equation*}
Note that this projection is not the same as the projection operation that is commonly used in OCO. The latter operation is used to project a candidate solution onto a feasible set (see also, e.g., \cite{Hazan2016}), whereas here we project a feasible set onto a decision vector.

The second approach is worth considering over the first approach if constructing the projection $\mathcal{C}^{\bar{t}}_{\text{proj}}$ is relatively easy. An example of this is the case where all constraints~(\ref{P_02}) and~(\ref{P_03}) are affine and the sets $\mathcal{C}^t$ are polytopes. In this case, the feasible set of Problem~\prob\ is given by a set of linear inequalities and, as a consequence, one can use a method such as Fourier-Motzkin elimination (FME) (see, e.g., \cite{Bertsimas1997}) to compute the projection~$\mathcal{C}^t_{\text{proj}}$. Although FME has an exponential worst-time complexity, several improvements to the original FME algorithm can be made to make it fast in practice (see, e.g., \cite{Bastrakov2015}). In addition, specific problem structures of the considered instance of Problem~\prob\ can be exploited to increase the efficiency of FME. In Section~\ref{sec_battery}, we show that this is indeed possible for a particular battery charging scheduling problem.

Summarizing, using the ODDO-framework we can solve the online version of Problem~\prob\ as follows. First, we compute a prediction $(\hat{\mu},\hat{\lambda})$ of the optimal Lagrange multipliers $(\mu^*,\lambda^*)$ (we discuss several approaches to make such a prediction in Section~\ref{sec_setup}). Subsequently, at the start of each stage $\bar{t} \in \mathcal{T}$, we compute the corresponding online solution vector $\hat{x}^{\bar{t}}$ in one of the following two ways. If the alternative feasible set $\mathcal{C}^{\bar{t}}_{\text{proj}}$ can be computed easily, e.g., by means of FME, then we solve Problem~$(P^{\bar{t}}_{\text{proj}}(\hat{\mu},\hat{\lambda}))$ using this set as input and choose $\hat{x}^{\bar{t}}$ as the solution to this problem. Otherwise, we solve Problem~$(P^{\bar{t}}(\hat{\mu},\hat{\lambda}))$, i.e., we enforce all original constraints into the formulation of the inner problem of the Lagrangian dual function $q^{\bar{t}} (\hat{\mu},\hat{\lambda})$, and take the resulting solution vector corresponding to stage~$\bar{t}$ as the online solution~$\hat{x}^{\bar{t}}$. 

Algorithm~\ref{alg_online} summarizes the ODDO-framework as presented in this section. The check in Line~4 has been included mainly for the sake of completeness: when implementing the algorithm for a specific instance of Problem~\prob\, one can most likely determine on forehand whether or not the projection $\mathcal{C}^t_{\text{proj}}$ is easy to compute and directly go to Line~5 or Line~7.
\begin{algorithm}
\caption{Solving the online version of Problem~\prob\ using the ODDO-framework.}
\label{alg_online}
\begin{algorithmic}[1]
\STATE{\textbf{Input}: prediction $(\hat{\mu},\hat{\lambda})$ of optimal Lagrange multipliers}
\STATE{\textbf{Output}: online solution $\hat{x} := (\hat{x}^1,\ldots,\hat{x}^T)$}
\FOR {$\bar{t}=1,\ldots,T$}
\IF {Projection $\mathcal{C}^{\bar{t}}_{\text{proj}}$ can be computed easily}
\STATE{Compute $\mathcal{C}^{\bar{t}}_{\text{proj}}$ and obtain solution $\tilde{x}^{\bar{t}}$ to Problem~$(P^{\bar{t}}_{\text{proj}}(\hat{\mu},\hat{\lambda}))$}
\ELSE
\STATE {Obtain solution $(\bar{x}^{\bar{t}},\ldots,\tilde{x}^T)$ to Problem $(P^{\bar{t}}(\hat{\mu},\hat{\lambda}))$}
\ENDIF
\STATE{$\hat{x}^{\bar{t}} := \tilde{x}^{\bar{t}}$}
\ENDFOR
\end{algorithmic}
\end{algorithm}

\subsection{An illustrative example}
\label{sec_toy}

As an illustrative example of the ODDO-framework, we consider the following example problem, which is an instance of the resource allocation problem RAP introduced in Section~\ref{sec_Lagrange}:
\begin{subequations}
\begin{align}
\text{E} \ : \ \min_{x^1,x^2,x^3} \ & \sum_{t=1}^3 (x^t)^2 + Y^t x^t \\
\text{s.t. } &x^1 + x^2 + x^3 = 10, \label{toy_02}\\
& 0 \leq x^t \leq 6, \quad t \in \lbrace 1,2,3 \rbrace. \label{toy_03}
\end{align}  %
\end{subequations}
Since all stage vectors $x^t$ are one-dimensional, we omit the subscript index, meaning that in this example $x^t \in \mathbb{R}$ for all $t \in \mathcal{T}$. In this problem, the parameters $Y^1$, $Y^2$, and $Y^3$ are uncertain and are revealed at the start of stages~1,~2, and~3 respectively. Furthermore, there is only a single Lagrange multiplier $\lambda$ that corresponds to the equality constraint~(\ref{toy_02}). For this multiplier, we take as prediction $\hat{\lambda} = 2$.

We start the online optimization process at the beginning of stage~1. At this moment it is revealed that $Y^1 = -4$. Thus, the local Lagrangian dual function at stage~1 is
\begin{equation*}
q^1_{\text{E}}(\lambda) := \min_{x^1} ((x^1)^2 + (-4 + \lambda)x^1 \ | \ 0 \leq x^1 \leq 6 ).
\end{equation*}
Hereby, we still need to take future feasibility into account. Due to the simple structure of Problem~E, we can easily compute the projection of the feasible region onto $x^1$. For this, note that we must have $\hat{x}^1 = 10 - x^2 - x^3$ for some $x^2$ and $x^3$ that satisfy their box constraints~(\ref{toy_03}). It follows that $-2 \leq \hat{x}^1 \leq 10$. Since these bounds are less tight than the original bounds imposed by Constraint~(\ref{toy_03}), we can ignore them and thus we can choose $\hat{x}^1$ as the solution to the inner problem of $q^1_{(E)}(\hat{\lambda})$. This yields $\hat{x}^1 = 1$.

Subsequently, at the beginning of stage~2, it is revealed that $Y^2 = 1$. In order to ensure future feasibility, we must choose $\hat{x}^2$ such that $\hat{x}^2 = 10 - \hat{x}^1 - x^3$ for some $x^3$ that satisfies the bound constraints~(\ref{toy_03}). It follows that we must have $3 \leq \hat{x}^2 \leq 9$. Thus, we must choose $\hat{x}^2$ as the solution to the projected problem $(E^2_{\text{proj}}(2))$, which is given by
\begin{equation*}
\min_{x^2} \ (x^2)^2 + 3x^2 \quad \text{s.t. } 3 \leq x^2 \leq 6.
\end{equation*}
It follows that $\hat{x}^2 = 3$.

Finally, we arrive at stage~3. Assume, that the last uncertain parameter, $Y^3$, is revealed to be $-5$. However, its value has no influence on the online decision $\hat{x}^3$ since it is required to be $\hat{x}^3 = 10 - \hat{x}^1 - \hat{x}^2 = 6$. Note that this does not violate the box constraints~(\ref{toy_03}) for $x^3$. Thus, the online solution obtained by choosing $\hat{\lambda} = 2$ is $\hat{x} = (1,3,6)$ and its objective value is 25. The optimal offline solution, i.e., where the values for $Y^t$ are known on forehand, is given by $x^* = (4,\frac{3}{2},\frac{9}{2})$ with an objective value of $\frac{3}{2}$ and the corresponding optimal Lagrange multiplier is $\lambda^* = -4$.

\section{Motivation and robustness of ODDO}
\label{sec_rob}

With regard to the ODDO-approach presented in Section~\ref{sec_frame}, two important questions arise. The first question is about the predictive value of the optimal Lagrange multipliers: what is the value of knowing the optimal Lagrange multipliers when the cost functions are different from what was expected? In other words, can we say something about the difference between the optimal Lagrange multipliers of two problem instances whose cost functions are (slightly) different? The second question is about the robustness of this approach: how accurate must the predictions of the optimal Lagrange multipliers be such that this approach yields a good approximate online solution? In other words, what can we say about the difference in objective value between the online and optimal offline solution when we use a prediction of the optimal Lagrange multipliers as input for the online approach?

For the first question, we provide in Section~\ref{sec_mot} several arguments that suggest that a small change in the cost functions will lead to only proportionally small changes or even no changes at all in the optimal Lagrange multipliers. In Section~\ref{sec_rob_errors}, we answer the second question by deriving for the special class of optimization problems under submodular constraints a bound on the difference in objective value between the online and offline optimal solution.

\subsection{Predictive value of the optimal Lagrange multipliers}
\label{sec_mot}

To answer the first question, we provide several arguments that suggest that a small change in the cost functions leads to only proportionally small changes or even no changes at all in the optimal Lagrange multipliers. Additionally, we argue that, from a practical point of view, knowledge of the optimal Lagrange multipliers of a given problem provides useful structural information on the problem and its optimal solutions.

The first argument is based on results from sensitivity analysis on parametric convex programs. For this, we assume for the moment that the uncertainty in the cost functions is due to an uncertain parameter $w$ that resides in a given set $\mathcal{W} \subset \mathbb{R}^W$. In other words, each cost function $f^t$ is a function of both $x^t$ and $w$, i.e., $f^t(x^t) \equiv f^t(x^t,w)$. Let $\mathcal{F} \subset \mathbb{R}^N$ denote an arbitrary feasible region for Problem~\prob. For each $w \in \mathcal{W}$, we denote by $I(w)$ the problem instance of \prob\ with feasible set $\mathcal{F}$ and cost functions $f^t(x^t,w)$, $t \in \mathcal{T}$. Moreover, let $(\mu^*(w),\lambda^*(w))$ be the optimal Lagrange multipliers for $I(w)$. It can be shown under a relatively weak constraint qualification, such as the so-called Mangasarian Fromovitz Constraint Qualification, that $(\mu^*(w),\lambda^*(w))$ is Lipschitz continuous in $w$ (see, e.g., \cite{Still2018}). In particular, this means that there exists a positive constant $K_0$ such that for any $w^1,w^2 \in \mathcal{W}$ it holds that
\begin{equation*}
|| (\mu^*(w^1),\lambda^*(w^1)) - (\mu^*(w^2),\lambda^*(w^2))||_2 \leq K_0 ||w^1 - w^2||_2.
\end{equation*}
This implies that the difference in optimal Lagrange multipliers grows only linearly in the change  of the uncertainty parameter $w$.

The second argument is that, for a given feasible set $\mathcal{F}$, there are in general multiple instances of Problem~\prob\ over $\mathcal{F}$, i.e., multiple different cost functions, whose optimal Lagrange multipliers are the same. In other words, the mapping from the set of possible cost functions to the corresponding optimal Lagrange multipliers is not necessary one-to-one. This is because Constraint~(\ref{P_04}) is not dualized in the formulation of the Lagrangian in Equation~(\ref{eq_Lagrangian}). As a consequence, the Lagrange multipliers that correspond to any constraints imposed by the sets $\mathcal{C}^t$ are not specified in a vector of optimal Lagrange multipliers. Thus, it can occur for two problem instances with different cost functions that their Lagrange multipliers differ only in these non-specified Lagrange multipliers. As an example of this, we consider again the convex resource allocation problem RAP introduced in Section~\ref{sec_Lagrange}. Recall that, given the optimal Lagrange multiplier $\lambda^*$, the optimal solution to RAP is given by
\begin{equation*}
x^i (\lambda^*) = \left\{
\begin{array}{ll}
l^i & \text{if } (\nabla f^i)^{-1}(-\lambda^*) < l^i, \\
(\nabla f^i)^{-1}(-\lambda^*) & \text{if } l^i \leq (\nabla f^i)^{-1}(-\lambda^*) \leq u^i, \\
u^i & \text{if } (\nabla f^i )^{-1}(-\lambda^*) > u^i.
\end{array}
\right. 
\end{equation*}

Observe that $\lambda^*$ remains the optimal Lagrange multiplier for this problem for, e.g., any permutation of the index set $\lbrace 1,\ldots,n \rbrace$. Note, however, that the Lagrange multipliers that correspond to the box constraints~(\ref{RAP_03}) would be different. 



The two given arguments together suggest that the optimal Lagrange multipliers may be robust against changes in the cost functions as:
\begin{itemize}
\item
changes in the cost functions does not always lead to a change in the optimal Lagrange multipliers;
\item
even if the optimal Lagrange multipliers change, this change is bounded by the change of the uncertain data that underlies the cost functions.
\end{itemize}

The above discussion indicates that the optimal Lagrange multipliers may provide more structural information about the problem and its optimal solution than the cost functions. Besides the properties discussed in this subsection, there are other properties of the optimal Lagrange multipliers that support this claim. One of these properties is complementary slackness: if for a given inequality constraint~(\ref{P_02}) with index $j \in \mathcal{M}$ the corresponding optimal Lagrange multiplier $\mu^*_j$ is nonzero, then this constraint is tight in the optimal primal solution $x^*$. This is not only a nice theoretical structural relation between the optimal Lagrange multipliers and the optimal primal solution, but can also have a useful physical interpretation depending on the application. Two examples of this are present the two problems that we consider in the evaluation of ODDO in Section~\ref{sec_eval}, namely battery scheduling and inventory management. In the first problem, a tight inequality constraint implies that, in the optimal solution, the battery is either completely full or empty at a given stage (see also Section~\ref{sec_battery}). In the latter problem, tight inequality constraints imply that, in the optimal solution, a given factory produces at a capacity utilization rate of 100\% or that the level of stock in the warehouse has reached either its minimum or maximum allowed level at a given stage (see also Section~\ref{sec_IM}). These insights can not only be used to determine the optimal operation of the battery or the inventory management systems, but also to design these systems and decide on the necessity and profitability of, e.g., investments in battery or warehouse capacity.

\subsection{Robustness of ODDO against prediction errors}
\label{sec_rob_errors}

In this section, we consider the second question concerning the relation between the prediction $(\hat{\mu},\hat{\lambda})$ of the optimal Lagrange multipliers and the difference in objective value between the online and optimal offline solution. To answer this question, we derive a bound on the difference in objective value between the online solution~$\hat{x}(\hat{\mu},\hat{\lambda})$ and the optimal offline solution for the special case of optimization problems with submodular constraints and increasing cost functions. A disadvantage of this bound is that it depends not only on the Lagrange multipliers but also on the uncertain cost functions. Therefore, we refine this bound for the special case where each cost function is of the form $a^t \bar{f}^t (\frac{x^t}{a^t} + b^t)$ where $\bar{f}^t$ is a \emph{known} increasing strictly convex differentiable function, $a := (a^t)_{t \in \mathcal{T}}$ is a \emph{known} positive-valued vector, and $b := (b^t)_{t \in \mathcal{T}}$ is an \emph{uncertain} vector. As shown in \cite{SchootUiterkamp2020a}, in many applications, the cost functions have this form. In particular, this is the case for the battery charging scheduling problem that we consider in Section~\ref{sec_battery}. The resulting refined bound depends only on the Lagrange multipliers and the known functions $\bar{f}^t$ and parameter $a$. 

We first give a precise definition of the problem class that we consider in this section. For this, we require the concepts of submodular functions and base polyhedra. A set function $r \ : \ 2^{\mathcal{T}} \rightarrow \mathbb{R}$ is said to be \emph{submodular} if we have for any subsets $\mathcal{X},\mathcal{Y} \subseteq \mathcal{T}$ that $r(\mathcal{X} \cup \mathcal{Y}) + r(\mathcal{X} \cap \mathcal{Y}) \leq r(\mathcal{X}) + r(\mathcal{Y})$, where we assume without loss of generality that $r(\emptyset) = 0$. The \emph{base polyhedron} associated with a submodular set function $r$ is defined as
\begin{equation*}
\mathcal{B}(r) := \left\{ x \in \mathbb{R}^{T} \ \middle| \ \sum_{t \in \mathcal{X}} x^t \leq r(\mathcal{X}) \ \forall \mathcal{X} \subset \mathcal{T}, \  \sum_{t \in \mathcal{T}} x^t = r(\mathcal{T}) \right\} .
\end{equation*}
A well-known property of base polyhedra is that for any vector $x \in \mathcal{B}(r)$ the set of tight inequalities that define $\mathcal{B}(r)$ is closed under union and intersection:
\begin{lemma}
If for a submodular function $r : 2^{\mathcal{T}} \rightarrow \mathbb{R}$, vector $x \in \mathcal{B}(r)$, and two sets $\mathcal{X},\mathcal{Y} \subseteq \mathcal{T}$ we have that $\sum_{t \in \mathcal{X}} x^t = r(\mathcal{X})$ and $\sum_{t \in \mathcal{Y}} x^t = r(\mathcal{Y})$, then we also have that $\sum_{t \in \mathcal{X} \cup \mathcal{Y}} x^t = r(\mathcal{X} \cup \mathcal{Y})$ and $\sum_{t \in \mathcal{X} \cap \mathcal{Y}} x^t = r(\mathcal{X} \cap \mathcal{Y})$.
\label{lemma_sub}
\end{lemma}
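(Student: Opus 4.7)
The plan is to exploit the standard modularity identity for sums combined with the submodularity of $r$, and then use the polyhedral inequalities defining $\mathcal{B}(r)$ to squeeze two inequalities into equalities. The whole argument is a short chain of (in)equalities; there is no genuine obstacle, but one has to be careful to distinguish the linear (modular) identity for sums from the submodular inequality for $r$.

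First I would write down the elementary modular identity
\begin{equation*}
\sum_{t \in \mathcal{X}} x^t + \sum_{t \in \mathcal{Y}} x^t \; = \; \sum_{t \in \mathcal{X} \cup \mathcal{Y}} x^t + \sum_{t \in \mathcal{X} \cap \mathcal{Y}} x^t,
\end{equation*}
which holds for any real vector $x$ indexed by $\mathcal{T}$. Substituting the two assumed tightness equalities $\sum_{t \in \mathcal{X}} x^t = r(\mathcal{X})$ and $\sum_{t \in \mathcal{Y}} x^t = r(\mathcal{Y})$ on the left side gives
\begin{equation*}
r(\mathcal{X}) + r(\mathcal{Y}) \; = \; \sum_{t \in \mathcal{X} \cup \mathcal{Y}} x^t + \sum_{t \in \mathcal{X} \cap \mathcal{Y}} x^t.
\end{equation*}

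Next, I would invoke the two polyhedral inequalities that hold because $x \in \mathcal{B}(r)$, namely $\sum_{t \in \mathcal{X} \cup \mathcal{Y}} x^t \leq r(\mathcal{X} \cup \mathcal{Y})$ and $\sum_{t \in \mathcal{X} \cap \mathcal{Y}} x^t \leq r(\mathcal{X} \cap \mathcal{Y})$, together with the submodularity of $r$, yielding
\begin{equation*}
r(\mathcal{X}) + r(\mathcal{Y}) \; \leq \; r(\mathcal{X} \cup \mathcal{Y}) + r(\mathcal{X} \cap \mathcal{Y}) \; \leq \; r(\mathcal{X}) + r(\mathcal{Y}).
\end{equation*}

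Hence both inequalities are equalities, which forces each of the two polyhedral inequalities to be tight individually (since each left-hand term is bounded above by the corresponding right-hand term and the sum of the gaps is zero, both gaps must vanish). This yields $\sum_{t \in \mathcal{X} \cup \mathcal{Y}} x^t = r(\mathcal{X} \cup \mathcal{Y})$ and $\sum_{t \in \mathcal{X} \cap \mathcal{Y}} x^t = r(\mathcal{X} \cap \mathcal{Y})$, completing the proof. The only subtle point, and thus the part I would state most carefully, is the final "each gap is zero" step, which relies on both of the polyhedral upper bounds being valid (in particular, for $\mathcal{X} \cap \mathcal{Y} = \emptyset$ the convention $r(\emptyset) = 0$ is needed, and the union case is covered by the equality constraint $\sum_{t \in \mathcal{T}} x^t = r(\mathcal{T})$ when $\mathcal{X} \cup \mathcal{Y} = \mathcal{T}$).
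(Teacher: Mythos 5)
Your proof is correct and is essentially the paper's argument: both rest on the modular identity for sums, the submodularity inequality for $r$, and the polyhedral upper bounds from $x \in \mathcal{B}(r)$, squeezed into a chain of equalities whose termwise tightness gives the conclusion. The paper writes the same chain in one line (starting from $\sum_{t\in\mathcal{X}}x^t+\sum_{t\in\mathcal{Y}}x^t$ and returning to it), so there is no substantive difference; your explicit handling of the $\emptyset$ and $\mathcal{T}$ edge cases is a harmless extra precaution.
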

\begin{proof}
Since $\sum_{t \in \mathcal{X}} x^t = r(\mathcal{X})$ and $\sum_{t \in \mathcal{Y}} x^t = r(\mathcal{Y})$, it follows from the submodularity of $r$ that
\begin{equation*}
\sum_{t \in \mathcal{X}} x^t + \sum_{t \in \mathcal{Y}} x^t = r(\mathcal{X}) + r(\mathcal{Y})
\geq
r(\mathcal{X} \cup \mathcal{Y}) + r(\mathcal{X} \cap \mathcal{Y})
\geq
\sum_{t \in \mathcal{X} \cup \mathcal{Y}} x^t + \sum_{t \in \mathcal{X} \cap \mathcal{Y}} x^t
= \sum_{t \in \mathcal{X}} x^t + \sum_{t \in \mathcal{Y}} x^t .
\end{equation*}
Thus, $\sum_{t \in \mathcal{X} \cup \mathcal{Y}} x^t + \sum_{t \in \mathcal{X} \cap \mathcal{Y}} x^t = r(\mathcal{X} \cup \mathcal{Y}) + r(\mathcal{X} \cap \mathcal{Y})$, which directly implies the result of the lemma.
\end{proof}
\noindent For a thorough treatment of submodular functions and base polyhedra, we refer to \cite{Fujishige2005}.

We now define the class $I_{\text{sub}}$ of problem instances that we consider in this section:
\begin{definition}
The class $I_{\text{sub}}$ consists of all problem instances of the form
\begin{align*}
\min_{x^1,\ldots,x^T} \ & \sum_{t \in \mathcal{T}} f^t(x^t) \\
\text{s.t. } & x \in \mathcal{B}(r),
\end{align*}
where for all $t \in \mathcal{T}$ the vector $x^t$ is one-dimensional (i.e., $N^t = 1$), the function $f^t$ is increasing, strictly convex, and continuously differentiable, and $r$ is a submodular set function on $\mathcal{T}$.
\end{definition}
\noindent Note that each instance in $I_{\text{sub}}$ is also an instance of Problem~\prob\ by choosing
\begin{align*}
\mathcal{M} & := \lbrace \mathcal{X} \ | \ \mathcal{X} \in 2^{\mathcal{T}}, \ 1 < |\mathcal{X}| < T \rbrace, \\
\mathcal{L} &:= \lbrace \mathcal{T} \rbrace, \\
\mathcal{C}^t & := [r(\mathcal{T}) - r(\mathcal{T} \backslash \lbrace t \rbrace ) , r(\lbrace t \rbrace)], \\
g_{\mathcal{X}}^t (x^t) & := \begin{cases} 
x^t - \frac{r(\mathcal{X})}{|\mathcal{X}|} & \text{if } t \in \mathcal{X}, \\ 0 & \text{otherwise,} \end{cases} \\
h_{\mathcal{T}}^t (x^t) & := x^t -  \frac{r(\mathcal{T})}{T}, \quad t \in \mathcal{T}.
\end{align*}
Thus, for a given $t \in \mathcal{T}$, we can write the local Lagrangian dual function $q_{sub}^t(\mu,\lambda)$ as
\begin{equation*}
q_{\text{sub}}^t(\mu,\lambda) := \min_{x^t} \left( f^t (x^t) + \left( \sum_{\mathcal{X} \in \mathcal{M}: \ \mathcal{X} \ni t} \mu_{\mathcal{X}} + \lambda_{\mathcal{T}} \right) x^t \ \middle| \ r(\mathcal{T}) - r(\mathcal{T} \backslash \lbrace t \rbrace ) \leq x^t \leq  r(\lbrace t \rbrace) \right) ,
\end{equation*}
and, by letting $\nu^t(\mu,\lambda) := -\sum_{\mathcal{X} \in \mathcal{M}: \ \mathcal{X} \ni t} \mu_{\mathcal{X}} - \lambda_{\mathcal{T}}$, the local Lagrangian solution is given by
\begin{equation}
x^t(\mu,\lambda) := \begin{cases}
r(\mathcal{T}) - r(\mathcal{T} \backslash \lbrace t \rbrace ) & \text{if } (\nabla f^t)^{-1}(\nu^t(\mu,\lambda)) < r(\mathcal{T}) - r(\mathcal{T} \backslash \lbrace t \rbrace ), \\
(\nabla f^t)^{-1}(\nu^t(\mu,\lambda)) & \text{if } r(\mathcal{T}) - r(\mathcal{T} \backslash \lbrace t \rbrace ) \leq (\nabla f^t)^{-1}(\nu^t(\mu,\lambda)) \leq r(\lbrace t \rbrace), \\
r(\lbrace t \rbrace) & \text{if } (\nabla f^t)^{-1}(\nu^t(\mu,\lambda)) > r(\lbrace t \rbrace).
\end{cases}
\label{eq_local_L_sub}
\end{equation}

For this specific instance class $I_{\text{sub}}$, we can bound the difference in objective value between the online and optimal offline solution when the multiplier prediction $(\hat{\mu},\hat{\lambda})$ is an ``under-prediction'' of the optimal Lagrange multipliers $(\mu^*,\lambda^*)$, i.e., the inequality $(\hat{\mu},\hat{\lambda}) \leq (\mu^*,\lambda^*)$ holds component-wise. Apart from the predicted and optimal Lagrange multipliers $(\hat{\mu},\hat{\lambda})$ and $(\mu^*,\lambda^*)$, this bound depends on the cost functions $f^t$ and the inverses of their gradients $(\nabla f^t)^{-1}$. More precisely, the bound is the difference in objective value between the (not necessarily feasible) solutions $((\nabla f^t)^{-1}(v^t(\hat{\mu},\hat{\lambda})))_{t \in \mathcal{T}}$ and $((\nabla f^t)^{-1}(v^t(\mu^*,\lambda^*)))_{t \in \mathcal{T}}$. These solutions are closely related to the Lagrangian solutions $x(\hat{\mu},\hat{\lambda})$ and $x(\mu^*,\lambda^*)$ as given by Equation~(\ref{eq_local_L_sub}). We state this bound by means of Theorem~\ref{th_main}.

 \begin{theorem}
 For any problem instance in $I_{\text{sub}}$ and a given prediction $(\hat{\mu},\hat{\lambda})$ of the optimal Lagrange multipliers $(\mu^*,\lambda^*)$ such that $(\hat{\mu},\hat{\lambda}) \leq (\mu^*,\lambda^*)$, it holds that
\begin{equation*}
 \sum_{t \in \mathcal{T}} f^t (\hat{x}^t(\hat{\mu},\hat{\lambda})) - \sum_{t \in \mathcal{T}} f^t (\hat{x}^t(\mu^*,\lambda^*))  \leq 
 \sum_{t \in \mathcal{T}} f^t((\nabla f^t)^{-1}(\nu^t(\hat{\mu},\hat{\lambda})))
 -  \sum_{t \in \mathcal{T}} f^t((\nabla f^t)^{-1}(\nu^t(\mu^*,\lambda^*))).
 \end{equation*}
\label{th_main}
 \end{theorem}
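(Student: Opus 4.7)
My plan is to reduce the claim to a comparison of ``clipping losses'' and then exploit the base-polyhedron structure of $\mathcal{B}(r)$ together with Lemma~\ref{lemma_sub}. First, I write $y^t := (\nabla f^t)^{-1}(\nu^t(\hat\mu,\hat\lambda))$ and $y^{*t} := (\nabla f^t)^{-1}(\nu^t(\mu^*,\lambda^*))$ for the unconstrained local Lagrangian minimizers. Since $\nu^t(\mu,\lambda) = -\sum_{\mathcal{X}\ni t}\mu_{\mathcal{X}}-\lambda_{\mathcal{T}}$ is componentwise decreasing in $(\mu,\lambda)$ and $(\nabla f^t)^{-1}$ is strictly increasing (by strict convexity of $f^t$), the under-prediction hypothesis forces $y^t \geq y^{*t}$, and monotonicity of $f^t$ then makes the right-hand side of the theorem nonnegative. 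By strong duality we have $\hat x^t(\mu^*,\lambda^*) = x^t(\mu^*,\lambda^*) = x^{*t}$, and equation~(\ref{eq_local_L_sub}) identifies $x^{*t}$ as the clip of $y^{*t}$ onto the box $B^t := [r(\mathcal{T}) - r(\mathcal{T}\setminus\{t\}),\, r(\{t\})]$; the same reasoning applied at $(\hat\mu,\hat\lambda)$ shows that $\hat x^t(\hat\mu,\hat\lambda)$ is the clip of $y^t$ onto a possibly narrower interval $\hat B^t \subseteq B^t$ dictated by the sequential feasibility projection.

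After rearrangement, the claim is equivalent to bounding the total online clipping gap,
\begin{equation*}
\sum_{t\in\mathcal{T}}\bigl[f^t(\hat x^t(\hat\mu,\hat\lambda)) - f^t(y^t)\bigr] \;\leq\; \sum_{t\in\mathcal{T}}\bigl[f^t(x^{*t}) - f^t(y^{*t})\bigr],
\end{equation*}
that is, the total clipping loss incurred online at the predicted multipliers is dominated by the clipping loss of the Lagrangian solution at the optimal multipliers. The key structural step invokes Lemma~\ref{lemma_sub}: since both $\hat x(\hat\mu,\hat\lambda)$ and $x^*$ lie in $\mathcal{B}(r)$ and saturate $\sum_t x^t = r(\mathcal{T})$, the collections of subsets $\mathcal{X}\subseteq\mathcal{T}$ along which either solution saturates a submodular inequality form sublattices of $2^{\mathcal{T}}$. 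Combined with a case analysis on where $y^{*t}$ sits relative to $B^t$ (inside, above, or below), these lattices would allow me to partition $\mathcal{T}$ into blocks on which the upward- and downward-clipping gaps cancel, using the forced equality $\sum_t(\hat x^t - x^{*t})=0$ to balance excess and deficit within each block.

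The main obstacle is executing this pairing cleanly, since $\hat x^t$ can strictly exceed $y^t$ exactly when $\hat B^t$'s lower endpoint lies above $y^t$, which happens when earlier online decisions were too small to meet the global resource target $r(\mathcal{T})$. I would handle this by using Lemma~\ref{lemma_sub} to argue that the set of such upward-clipped stages, together with an appropriate sub- or super-set from the tight-constraint lattice, is also tight for $x^*$, yielding a matched constraint to leverage. Once the partition is fixed, the final step is a block-by-block convexity argument: on each block, $y^t \geq y^{*t}$ combined with the convexity and monotonicity of $f^t$ turns a chord-slope inequality for $f^t$ into the desired pointwise domination of clipping loss by Lagrangian-minimizer gap, and summing over blocks yields the theorem.
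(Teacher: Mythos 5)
There is a genuine gap. Your setup is sound (the definitions of $y^t$ and $y^{*t}$, the observation that $y^t \geq y^{*t}$ under under-prediction, the identification of both the local Lagrangian solution and the online solution as clips onto intervals, and the rearrangement into a comparison of clipping losses), but the heart of the argument is missing. The paper does not need any global balancing: it proves the inequality \emph{term by term}, and the single fact that makes each term work is the pointwise domination $\hat{x}^t(\hat{\mu},\hat{\lambda}) \leq x^t(\mu^*,\lambda^*)$ for every stage $t$ (Lemma~\ref{lemma_prop_2}). Once you have that, each summand of your rearranged claim follows immediately from monotonicity of $f^t$: the online clipping term satisfies $f^t(\hat{x}^t(\hat{\mu},\hat{\lambda})) \leq f^t(x^t(\mu^*,\lambda^*))$ and $-f^t(y^t) \leq -f^t(y^{*t})$, so no cancellation across stages, no partition into blocks, and no chord-slope convexity step is required. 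Your proposal never states or proves this pointwise inequality; instead it defers to a pairing argument that you yourself flag as ``the main obstacle'' and describe only in the conditional (``would allow me to partition'', ``I would handle this by''). The claim that upward- and downward-clipping gaps cancel within blocks of the tight-set lattice is exactly the content that needs a proof, and it is not supplied; it is also not clear that such a cancellation holds in the form you describe, since the inequality you need is a one-sided domination, not an identity that balancing alone would deliver.

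The nontrivial work in the paper lives precisely where your sketch is vaguest. Lemma~\ref{lemma_prop_2} is proved by induction on the stage index: assuming the online solution has not overshot the local Lagrangian solution at earlier stages, one supposes $\hat{x}^{\bar{t}}(\mu,\lambda) > x^{\bar{t}}(\mu,\lambda)$ and derives a contradiction by exhibiting, for every future stage $s > \bar{t}$, a tight submodular constraint containing $s$ but not $\bar{t}$; taking unions of these sets together with the tight sets supplied by Lemma~\ref{lemma_tight} for earlier adjusted stages, and invoking the closure of tight sets under union and intersection (Lemma~\ref{lemma_sub}), one obtains a single tight set $\mathcal{S}'$ with $\bar{t} \notin \mathcal{S}'$ whose complement yields, via the chain of inequalities in the appendix and the monotonicity of the Lagrangian solution (Lemma~\ref{lemma_instance}), the bound $x^{\bar{t}}(\mu^*,\lambda^*) \geq \hat{x}^{\bar{t}}(\mu,\lambda)$ and hence a contradiction. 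You correctly sense that Lemma~\ref{lemma_sub} and the tight-set lattice are the right tools, but you deploy them toward an aggregate cancellation rather than toward the stage-by-stage comparison with the \emph{offline} Lagrangian solution, which is what actually closes the argument. To repair your proof, replace the block-pairing step with a proof of $\hat{x}^t(\hat{\mu},\hat{\lambda}) \leq x^t(\mu^*,\lambda^*)$ for all $t$; everything else in your proposal then reduces to two applications of monotonicity per stage.
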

We prove Theorem~\ref{th_main} using Lemmas~\ref{lemma_instance}-\ref{lemma_prop_2}. Lemma~\ref{lemma_instance} states that the local Lagrangian solution $x(\mu,\lambda)$ is non-increasing in $(\mu,\lambda)$. This implies that the behavior of the initial online solution $x^t(\mu,\lambda)$ can, to some extend, be controlled via the predicted Lagrange multipliers. Lemma~\ref{lemma_tight} implies that whenever the local Lagrangian solution $x^t(\hat{\mu},\hat{\lambda})$ needs to be reduced in order to preserve future feasibility, at least one constraint involving the current stage~$t$ but no future stages is tight in the online solution. This means that this constraint is fulfilled ``earlier'' than it would have been when using the optimal Lagrange multipliers as input. This suggests that the constraint is (nearly) tight in the optimal solution. Finally, Lemma~\ref{lemma_prop_2} uses Lemmas~\ref{lemma_instance} and~\ref{lemma_tight} to prove that, in the case of an under-prediction of the Lagrange-multipliers, the cost of an online solution $\hat{x}^t(\mu,\lambda)$ will always be less than or equal to the cost of the corresponding local Lagrangian solution $x^t(\mu,\lambda)$, even when adjustments have been done in order to maintain feasibility. 


\begin{lemma}
For any instance in $I_{\text{sub}}$, the local Lagrangian solutions $x^t(\mu,\lambda)$ are nonincreasing in $(\mu,\lambda)$, i.e., for any two vectors of Lagrange multipliers $(\underline{\mu},\underline{\lambda})$ and $(\bar{\mu},\bar{\lambda})$ such that $(\underline{\mu},\underline{\lambda}) \leq (\bar{\mu},\bar{\lambda})$, it holds that $x^t (\underline{\mu},\underline{\lambda}) \geq x^t (\bar{\mu},\bar{\lambda})$ for all $t \in \mathcal{T}$.
\label{lemma_instance}
\end{lemma}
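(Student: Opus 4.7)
The plan is to show that the claimed monotonicity factors through the auxiliary scalar $\nu^t(\mu,\lambda)$ and reduces to three elementary monotonicity facts composed together.

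First, I would observe that $\nu^t(\mu,\lambda) = -\sum_{\mathcal{X} \in \mathcal{M}:\ \mathcal{X}\ni t} \mu_{\mathcal{X}} - \lambda_{\mathcal{T}}$ is an affine function of $(\mu,\lambda)$ all of whose coefficients are nonpositive ($-1$ for the entries of $\mu$ indexed by sets containing $t$ and for $\lambda_{\mathcal{T}}$, and $0$ for all remaining entries). Hence $\nu^t$ is componentwise nonincreasing in $(\mu,\lambda)$, so $(\underline{\mu},\underline{\lambda}) \leq (\bar{\mu},\bar{\lambda})$ immediately gives $\nu^t(\underline{\mu},\underline{\lambda}) \geq \nu^t(\bar{\mu},\bar{\lambda})$ for every $t \in \mathcal{T}$.

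Next, since $f^t$ is strictly convex and continuously differentiable by the definition of $I_{\text{sub}}$, the derivative $\nabla f^t$ is a strictly increasing continuous function on its domain, and hence its inverse $(\nabla f^t)^{-1}$ is strictly increasing wherever it is defined. Combining this with the previous step yields
\begin{equation*}
(\nabla f^t)^{-1}(\nu^t(\underline{\mu},\underline{\lambda})) \;\geq\; (\nabla f^t)^{-1}(\nu^t(\bar{\mu},\bar{\lambda})).
\end{equation*}
Finally, inspecting the three-case definition~(\ref{eq_local_L_sub}) of $x^t(\mu,\lambda)$ reveals that it is exactly the projection of $(\nabla f^t)^{-1}(\nu^t(\mu,\lambda))$ onto the interval $[r(\mathcal{T}) - r(\mathcal{T} \backslash \lbrace t \rbrace),\, r(\lbrace t \rbrace)]$, i.e.\ a truncation by fixed constants independent of $(\mu,\lambda)$. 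Such truncation is a monotone operation, so the inequality is preserved and $x^t(\underline{\mu},\underline{\lambda}) \geq x^t(\bar{\mu},\bar{\lambda})$ follows for every $t$.

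There is no real obstacle here: the result is a routine monotonicity chain, and the only point requiring a sentence of care is checking that the coefficients appearing in $\nu^t$ are all of the same sign so that the affine map $\nu^t$ is genuinely monotone on the whole vector $(\mu,\lambda)$ rather than only on the relevant subset of its coordinates. The proof can therefore be presented in a few lines, essentially just chaining the three monotonicities above.
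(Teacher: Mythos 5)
Your proof is correct and follows essentially the same route as the paper's: both arguments note that $\nu^t$ is nonincreasing in $(\mu,\lambda)$, that $(\nabla f^t)^{-1}$ is increasing because $f^t$ is strictly convex and continuously differentiable, and that the three-case formula~(\ref{eq_local_L_sub}) is a monotone truncation of $(\nabla f^t)^{-1}(\nu^t(\mu,\lambda))$ onto a fixed interval. Your explicit check that all coefficients of $\nu^t$ are nonpositive is a slightly more careful statement of a step the paper leaves implicit, but the argument is the same.
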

\begin{proof}
Since $f^t$ is strictly convex and continuously differentiable, its gradient $\nabla f^t$ is continuous and monotonically increasing. It follows from the inverse function theorem that also the inverse $(\nabla f^t)^{-1}$ is continuous and monotonically increasing (see, e.g., \cite{Spivak1965}). Thus, for any two vectors of multipliers $(\underline{\mu},\underline{\lambda})$ and $(\bar{\mu},\bar{\lambda})$ such that  $(\underline{\mu},\underline{\lambda}) \leq (\bar{\mu},\bar{\lambda})$, it holds that 
\begin{align*}
(\nabla f^t)^{-1} (v^t(\underline{\mu},\underline{\lambda})) &= 
(\nabla f^t)^{-1} \left( -\sum_{\mathcal{X} \in \mathcal{M}: \ \mathcal{X} \ni t} \underline{\mu}_{\mathcal{X}} 
- \underline{\lambda}_{\mathcal{T}} \right) \\
& \geq
(\nabla f^t)^{-1} \left( -\sum_{\mathcal{X} \in \mathcal{M}: \ \mathcal{X} \ni t} \bar{\mu}_{\mathcal{X}} 
- \bar{\lambda}_{\mathcal{T}} \right)
=
(\nabla f^t)^{-1} (v^t(\bar{\mu},\bar{\lambda})) 
.
\end{align*}
The expression in Equation~(\ref{eq_local_L_sub}) for the local Lagrangian solution $x^t(\mu,\lambda)$ implies that $x^t(\mu,\lambda)$ can be seen as a piecewise nondecreasing function of $(\nabla f^t)^{-1}(v^t(\mu,\lambda))$. Thus, it follows that $x^t(\underline{\mu},\underline{\lambda}) \geq x^t(\bar{\mu},\bar{\lambda})$, which proves the lemma.
\end{proof}

\begin{lemma}
For any instance in $I_{\text{sub}}$, a given $\bar{t} \in \mathcal{T}$, and given multipliers $(\mu,\lambda)$ and corresponding online solution $\hat{x}^{\bar{t}}(\mu,\lambda)$ and local Lagrangian solution $x^{\bar{t}}(\mu,\lambda)$, we have:
\begin{equation*}
\hat{x}^{\bar{t}}(\mu,\lambda) < x^{\bar{t}}(\mu,\lambda)
\Longrightarrow 
\exists \bar{\mathcal{S}}^{\bar{t}} \subseteq \left\{ 1,\ldots,\bar{t} \right\} \text{ with } \bar{\mathcal{S}}^{\bar{t}} \ni \bar{t} \text{ such that } \sum_{t \in \bar{\mathcal{S}}^{\bar{t}}} \hat{x}^t(\mu,\lambda) = r\left(\bar{\mathcal{S}}^{\bar{t}} \right).
\end{equation*}
\label{lemma_tight}
\end{lemma}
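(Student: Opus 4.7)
The plan is to prove Lemma~\ref{lemma_tight} by contradiction, combining an exchange argument on the base polyhedron $\mathcal{B}(r)$ with iterated applications of Lemma~\ref{lemma_sub} to collapse a family of tight sets into a single one that sits inside $\{1,\ldots,\bar{t}\}$.

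First, I would reinterpret the hypothesis $\hat{x}^{\bar{t}}(\mu,\lambda) < x^{\bar{t}}(\mu,\lambda)$ as a maximality statement. Since $f^{\bar{t}}$ is strictly convex, so is the local Lagrangian $L^{\bar{t}}(\cdot,\mu,\lambda)$, whose unconstrained minimizer over the box $\mathcal{C}^{\bar{t}}$ equals $x^{\bar{t}}(\mu,\lambda)$. The online solution $\hat{x}^{\bar{t}}(\mu,\lambda)$ minimizes the same function over the (one-dimensional, hence interval) set $\mathcal{C}^{\bar{t}}_{\text{proj}} \subseteq \mathcal{C}^{\bar{t}}$, so $\hat{x}^{\bar{t}} < x^{\bar{t}}(\mu,\lambda)$ can only occur if $\hat{x}^{\bar{t}}$ coincides with the upper endpoint of $\mathcal{C}^{\bar{t}}_{\text{proj}}$. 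Equivalently, $\hat{x}^{\bar{t}}$ is the largest value of $x^{\bar{t}}$ admitting a completion $(x^{\bar{t}+1},\ldots,x^T)$ with $(\hat{x}^1,\ldots,\hat{x}^{\bar{t}-1},\hat{x}^{\bar{t}},x^{\bar{t}+1},\ldots,x^T) \in \mathcal{B}(r)$.

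Next, assume for contradiction that no subset $\bar{\mathcal{S}}^{\bar{t}} \subseteq \{1,\ldots,\bar{t}\}$ containing $\bar{t}$ satisfies $\sum_{t \in \bar{\mathcal{S}}^{\bar{t}}} \hat{x}^t = r(\bar{\mathcal{S}}^{\bar{t}})$. Fix any feasible completion $x^* \in \mathcal{B}(r)$ with $x^{*,t} = \hat{x}^t$ for $t \leq \bar{t}$. For every $j \in \{\bar{t}+1,\ldots,T\}$, study the perturbation $x^* + \epsilon(e_{\bar{t}} - e_j)$ for small $\epsilon > 0$. This perturbation preserves the equality $\sum_{t \in \mathcal{T}} x^t = r(\mathcal{T})$, and the only inequality constraints whose left-hand side strictly increases are those indexed by sets $\mathcal{X}$ with $\bar{t} \in \mathcal{X}$ and $j \notin \mathcal{X}$. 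Consequently, if no such $\mathcal{X}$ were tight at $x^*$, the perturbed vector would remain in $\mathcal{B}(r)$ for sufficiently small $\epsilon$, contradicting the maximality of $\hat{x}^{\bar{t}}$. Hence, for each $j > \bar{t}$ there exists a tight set $\mathcal{X}_j \subseteq \mathcal{T}$ at $x^*$ with $\bar{t} \in \mathcal{X}_j$ and $j \notin \mathcal{X}_j$.

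Applying Lemma~\ref{lemma_sub} iteratively to $\{\mathcal{X}_j\}_{j>\bar{t}}$ then shows that $\mathcal{X}^* := \bigcap_{j>\bar{t}} \mathcal{X}_j$ is itself tight at $x^*$. By construction $\bar{t} \in \mathcal{X}^*$ and $\mathcal{X}^* \cap \{\bar{t}+1,\ldots,T\} = \emptyset$, so $\mathcal{X}^* \subseteq \{1,\ldots,\bar{t}\}$; since $x^{*,t} = \hat{x}^t$ for $t \leq \bar{t}$, the tightness relation reads $\sum_{t \in \mathcal{X}^*} \hat{x}^t = r(\mathcal{X}^*)$, contradicting our assumption (the boundary case $\bar{t}=T$ is trivial: $\bar{\mathcal{S}}^T = \mathcal{T}$ is always tight through the equality constraint). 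The main obstacle I anticipate is making the exchange step watertight: one must verify that decreasing $x^{*,j}$ cannot trigger an unforeseen violation, but this is automatic because the lower box bound $x^j \geq r(\mathcal{T}) - r(\mathcal{T} \setminus \{j\})$ corresponds precisely to the inequality indexed by $\mathcal{T} \setminus \{j\}$, which does contain $\bar{t}$ and exclude $j$ and is therefore already captured by the tightness analysis.
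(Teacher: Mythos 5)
Your proof is correct and follows essentially the same route as the paper's: an exchange argument showing that for each future stage $j > \bar{t}$ some tight inequality must separate $\bar{t}$ from $j$ (otherwise a small transfer $\epsilon(e_{\bar{t}} - e_j)$ would contradict the optimality of $\hat{x}^{\bar{t}}$ over the projected set), followed by intersecting these tight sets via Lemma~\ref{lemma_sub}. Your treatment is in fact slightly more explicit than the paper's on two points — fixing a concrete feasible completion $x^*$ and verifying that the lower box bound on $x^j$ is subsumed by the inequality indexed by $\mathcal{T}\setminus\{j\}$ — but the underlying argument is identical.
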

\begin{proof}
See Appendix~\ref{app_lemma_tight}.
\end{proof}

\begin{lemma}
For any instance in $I_{\text{sub}}$ and given any Lagrange multipliers $(\mu,\lambda)$ such that $(\mu,\lambda) \leq (\mu^*,\lambda^*)$, it holds that $\hat{x}^t (\mu,\lambda) \leq x^t(\mu^*,\lambda^*)$ for all $t \in \mathcal{T}$.
\label{lemma_prop_2}
\end{lemma}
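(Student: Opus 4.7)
My plan is induction on the stage index $\bar{t}$, combining Lemma~\ref{lemma_instance} (monotonicity of the local Lagrangian solution in the multipliers) with Lemma~\ref{lemma_tight} (a tight subset witnesses every feasibility-driven reduction). For the base case $\bar{t}=1$, the range property of the base polyhedron $\mathcal{B}(r)$ gives $\mathcal{C}^1_{\text{proj}}=\mathcal{C}^1=[r(\mathcal{T})-r(\mathcal{T}\setminus\{1\}),\,r(\{1\})]$, so no feasibility reduction is possible and $\hat{x}^1(\mu,\lambda)=x^1(\mu,\lambda)$. The claim at this stage then reduces to showing that under under-prediction the unconstrained local Lagrangian minimizer $(\nabla f^1)^{-1}(\nu^1(\mu,\lambda))$ is large enough that both $x^1(\mu,\lambda)$ and $x^1(\mu^*,\lambda^*)$ coincide with the upper box bound $r(\{1\})$; Lemma~\ref{lemma_instance} guarantees the comparison between the two unconstrained minimizers that drives this.

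For the inductive step I would assume $\hat{x}^s(\mu,\lambda)\leq x^s(\mu^*,\lambda^*)$ for all $s<\bar{t}$ and split on whether a reduction is triggered at $\bar{t}$. In the \emph{reduction case}, $\hat{x}^{\bar{t}}(\mu,\lambda)<x^{\bar{t}}(\mu,\lambda)$, Lemma~\ref{lemma_tight} supplies a subset $\bar{\mathcal{S}}^{\bar{t}}\subseteq\{1,\ldots,\bar{t}\}$ containing $\bar{t}$ such that $\sum_{s\in\bar{\mathcal{S}}^{\bar{t}}}\hat{x}^s(\mu,\lambda)=r(\bar{\mathcal{S}}^{\bar{t}})$. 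Solving this equality for $\hat{x}^{\bar{t}}$, and combining with the feasibility bound $\sum_{s\in\bar{\mathcal{S}}^{\bar{t}}}x^s(\mu^*,\lambda^*)\leq r(\bar{\mathcal{S}}^{\bar{t}})$ and the induction hypothesis applied to the indices $s<\bar{t}$ in $\bar{\mathcal{S}}^{\bar{t}}$, would then deliver the desired bound on $\hat{x}^{\bar{t}}(\mu,\lambda)$. In the \emph{no-reduction case}, $\hat{x}^{\bar{t}}(\mu,\lambda)=x^{\bar{t}}(\mu,\lambda)$, and I would combine Lemma~\ref{lemma_instance} with the observation that under-prediction pushes the unconstrained minimizer above the upper box bound $r(\{\bar{t}\})$, pinning both $x^{\bar{t}}(\mu,\lambda)$ and $x^{\bar{t}}(\mu^*,\lambda^*)$ to this bound.

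The hard part is the slack problem in the reduction case: the tight set $\bar{\mathcal{S}}^{\bar{t}}$ supplied by Lemma~\ref{lemma_tight} is tight only for the online trajectory and need not be tight under $x(\mu^*,\lambda^*)$, so subtracting the feasibility inequality from the tight equality leaves a gap that is not a priori small. To close this gap I would invoke Lemma~\ref{lemma_sub}: the tight sets produced at the earlier reduction stages are closed under union and intersection, and I expect that taking the appropriate union (respectively, minimal such tight set containing $\bar{t}$) yields a subset that is simultaneously tight for $x(\mu^*,\lambda^*)$, turning the feasibility inequality into an equality so the bound closes. Consistently iterating this refinement across all prior reduction stages, and using submodularity to propagate tightness, is the step I would have to be most careful about.
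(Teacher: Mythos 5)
Your overall scaffolding (induction over stages, using Lemmas~\ref{lemma_sub}, \ref{lemma_instance} and~\ref{lemma_tight}) matches the paper's, but the core step fails for a sign reason that no amount of tightening can repair. In your reduction case you take the set $\bar{\mathcal{S}}^{\bar{t}}\ni\bar{t}$ from Lemma~\ref{lemma_tight}, solve $\sum_{s\in\bar{\mathcal{S}}^{\bar{t}}}\hat{x}^s(\mu,\lambda)=r(\bar{\mathcal{S}}^{\bar{t}})$ for $\hat{x}^{\bar{t}}(\mu,\lambda)$, and combine with feasibility of $x(\mu^*,\lambda^*)$ and the induction hypothesis. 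But the earlier-stage terms enter with a minus sign, so the hypothesis $\hat{x}^s(\mu,\lambda)\le x^s(\mu^*,\lambda^*)$ gives $\hat{x}^{\bar{t}}(\mu,\lambda)=r(\bar{\mathcal{S}}^{\bar{t}})-\sum_{s\in\bar{\mathcal{S}}^{\bar{t}}\setminus\lbrace\bar{t}\rbrace}\hat{x}^s(\mu,\lambda)\ \ge\ r(\bar{\mathcal{S}}^{\bar{t}})-\sum_{s\in\bar{\mathcal{S}}^{\bar{t}}\setminus\lbrace\bar{t}\rbrace}x^s(\mu^*,\lambda^*)\ \ge\ x^{\bar{t}}(\mu^*,\lambda^*)$ --- the reverse of what you want; upgrading the feasibility inequality to an equality (your Lemma~\ref{lemma_sub} refinement) only turns the last step into an equality and does not flip the direction. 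The base case and no-reduction case are also unsupported: Lemma~\ref{lemma_instance} gives $x^t(\mu,\lambda)\ge x^t(\mu^*,\lambda^*)$ under under-prediction and provides no reason for either solution to be clamped at $r(\lbrace t\rbrace)$. Indeed, at $\bar{t}=1$, where $\hat{x}^1(\mu,\lambda)=x^1(\mu,\lambda)$, the displayed inequality would force $x^1(\mu,\lambda)=x^1(\mu^*,\lambda^*)$, which fails for a generic strict under-prediction; the inequality the paper's proof actually establishes (and the one invoked in the proof of Theorem~\ref{th_main}, where $f^t(\hat{x}^t(\hat{\mu},\hat{\lambda}))\le f^t(x^t(\hat{\mu},\hat{\lambda}))$ is needed) is $\hat{x}^t(\mu,\lambda)\le x^t(\mu,\lambda)$, i.e., the comparison is with the local Lagrangian solution at the \emph{predicted} multipliers.

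The paper's argument is oriented the other way around. Its induction hypothesis is $\hat{x}^s(\mu,\lambda)\le x^s(\mu,\lambda)$ for $s<\bar{t}$, and it assumes for contradiction that $\hat{x}^{\bar{t}}(\mu,\lambda)>x^{\bar{t}}(\mu,\lambda)$, i.e., that future feasibility forced the online value \emph{up}. Blocking the exchange of a small $\epsilon$ from stage $\bar{t}$ to a future stage $s$ then produces, for every $s>\bar{t}$, a tight set $\hat{\mathcal{S}}^s$ containing $s$ but \emph{excluding} $\bar{t}$; these are united (via Lemma~\ref{lemma_sub}) with the Lemma~\ref{lemma_tight} sets of all earlier stages at which a downward adjustment occurred. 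The complement $\mathcal{T}\setminus\mathcal{S}'$ of the resulting tight set contains $\bar{t}$ and otherwise only stages with $\hat{x}^t(\mu,\lambda)=x^t(\mu,\lambda)$, so $\hat{x}^{\bar{t}}(\mu,\lambda)$ appears with a \emph{plus} sign in $r(\mathcal{T})-r(\mathcal{S}')=\sum_{t\in\mathcal{T}\setminus\mathcal{S}'}\hat{x}^t(\mu,\lambda)$; bounding the remaining terms below by $x^t(\mu^*,\lambda^*)$ via Lemma~\ref{lemma_instance} and using feasibility of $x(\mu^*,\lambda^*)$ on $\mathcal{S}'$ yields $x^{\bar{t}}(\mu^*,\lambda^*)\ge\hat{x}^{\bar{t}}(\mu,\lambda)>x^{\bar{t}}(\mu,\lambda)$, contradicting Lemma~\ref{lemma_instance}. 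The two ideas missing from your plan are this reversal of the contradiction hypothesis and the use of tight sets that exclude, rather than contain, the current stage.
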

\begin{proof}
See Appendix~\ref{app_lemma_prop_2}.
\end{proof}

Using this last lemma, we now prove Theorem~\ref{th_main}:
 \begin{proof}[Proof of Theorem~\ref{th_main}]
 We prove the theorem by showing that the inequality stated in the theorem holds for each $t \in \mathcal{T}$ individually, i.e., $ f^t(\hat{x}^t(\hat{\mu},\hat{\lambda})) - f^t(x^t(\mu^*,\lambda^*)) \leq 
  f^t((\nabla f^t)^{-1}(\nu^t(\hat{\mu},\hat{\lambda})))
 -  
  f^t((\nabla f^t)^{-1}(\nu^t(\mu^*,\lambda^*)))$. The result then follows by summing this inequality over $t$.
  
   Without loss of generality, we may assume that the local Lagrangian solutions $x^t(\hat{\mu},\hat{\lambda}) \neq x^t(\mu^*,\lambda^*)$ and that $\nu^t(\hat{\mu},\hat{\lambda}) \neq \nu^t(\mu^*,\lambda^*)$. Thus, by Lemma~\ref{lemma_prop_2} and the fact that $v^t(\mu,\lambda)$ is non-increasing in $(\mu,\lambda)$, it follows that $x^t(\hat{\mu},\hat{\lambda}) < x^t(\mu^*,\lambda^*)$ and $\nu^t(\hat{\mu},\hat{\lambda}) > \nu^t(\mu^*,\lambda^*)$ respectively. By the definition of the local Lagrangian solution $x^t(\mu,\lambda)$ in Equation~(\ref{eq_local_L_sub}), this implies that $x^t(\hat{\mu},\hat{\lambda}) \leq (\nabla f^t)^{-1}(v(\hat{\mu},\hat{\lambda}))$ and $x^t(\mu^*,\lambda^*) \geq (\nabla f^t)^{-1}( v(\mu^*,\lambda^*))$.
 Thus, for each $t \in \mathcal{T}$, it follows by Lemma~\ref{lemma_prop_2} and the fact that $f^t$ is increasing that
 \begin{align*}
 f^t(\hat{x}^t(\hat{\mu},\hat{\lambda})) - f^t(x^t(\mu^*,\lambda^*))
& \leq 
  f^t(x^t(\hat{\mu},\hat{\lambda})) - f^t(x^t(\mu^*,\lambda^*)) \\
 & \leq
 f^t((\nabla f^t)^{-1}(\nu^t(\hat{\mu},\hat{\lambda})))
 -  
  f^t((\nabla f^t)^{-1}(\nu^t(\mu^*,\lambda^*))).
 \end{align*}
\end{proof}

Theorem~\ref{th_main} provides us with a bound on the difference in objective value between the online and optimal offline solution to Problem~\prob. Note, however, that the bound in Theorem~\ref{th_main} depends on the cost functions $f^t$ and the inverses of their gradients $(\nabla f^t)^{-1}$ that are both uncertain. In the following, for a particular type of cost functions, we refine the bound in Theorem~\ref{th_main} so that it depends only on the Lagrange multipliers and some other known structures. For this, suppose that each cost function $f^t$ is given by $a^t \bar{f}^t(\frac{x^t}{a^t} + b^t)$, where the increasing strictly convex differentiable function $\bar{f}^t$ and the parameter $a^t \in \mathbb{R}_{>0}$ are known but the parameter $b^t \in \mathbb{R}$ is uncertain. We show in Corollary~\ref{col_main_sep} that for instances in $I_{\text{sub}}$ whose cost functions have this structure, the bound in Theorem~\ref{th_main} does not depend on any uncertain functions or parameters. Subsequently, as an example of how the bound can be simplified further for specific choices of $\bar{f}^t$, we apply this result to the cases where each function $\bar{f}^t$ is a power function or an exponential function in Corollaries~\ref{col_bound_poly} and~\ref{col_bound_exp} respectively. The former case applies to the battery charging scheduling problem considered in Section~\ref{sec_battery}.
\begin{corollary}
If $f^t(x^t) := a^t \bar{f}^t(\frac{x^t}{a^t} + b^t)$ for a given increasing strictly convex differentiable function $\bar{f}^t$ and a parameter $a^t \in \mathbb{R}_{>0}$ but uncertain parameter $b^t \in \mathbb{R}$, the bound in Theorem~\ref{th_main} becomes
\begin{equation*}
 \sum_{t \in \mathcal{T}} a^t \bar{f}^t((\nabla \bar{f}^t)^{-1}(\nu^t(\hat{\mu},\hat{\lambda})))
 -  \sum_{t \in \mathcal{T}} a^t \bar{f}^t((\nabla \bar{f}^t)^{-1}(\nu^t(\mu^*,\lambda^*))).
 \end{equation*}
 \label{col_main_sep}
\end{corollary}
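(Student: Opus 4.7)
The plan is to substitute the assumed form $f^t(x^t) = a^t \bar{f}^t\!\left(\tfrac{x^t}{a^t} + b^t\right)$ directly into the bound from Theorem~\ref{th_main} and show that all terms involving the uncertain $b^t$ collapse, leaving only quantities depending on the known $\bar{f}^t$, $a^t$, and the (predicted and optimal) Lagrange multipliers. The argument is essentially a chain-rule computation followed by algebraic simplification, so there is no real ``obstacle''; the only point requiring care is confirming that $b^t$ cancels cleanly.

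First, I would compute $\nabla f^t$ via the chain rule. Since $f^t$ is a scalar-input scalar function (recall $N^t = 1$ in $I_{\text{sub}}$), we get $\nabla f^t(x^t) = a^t \cdot \nabla \bar{f}^t\!\left(\tfrac{x^t}{a^t} + b^t\right) \cdot \tfrac{1}{a^t} = \nabla \bar{f}^t\!\left(\tfrac{x^t}{a^t} + b^t\right)$. Because $\bar{f}^t$ is strictly convex and differentiable, $\nabla \bar{f}^t$ is strictly increasing and therefore invertible on its range. Solving $y = \nabla \bar{f}^t\!\left(\tfrac{x^t}{a^t} + b^t\right)$ for $x^t$ yields $(\nabla f^t)^{-1}(y) = a^t \bigl((\nabla \bar{f}^t)^{-1}(y) - b^t\bigr)$.

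Next I would plug this into the composition $f^t \circ (\nabla f^t)^{-1}$. We have
\begin{equation*}
f^t\bigl((\nabla f^t)^{-1}(y)\bigr) = a^t \bar{f}^t\!\left(\frac{a^t\bigl((\nabla \bar{f}^t)^{-1}(y) - b^t\bigr)}{a^t} + b^t\right) = a^t \bar{f}^t\bigl((\nabla \bar{f}^t)^{-1}(y)\bigr),
\end{equation*}
so the $b^t$ terms exactly cancel inside the argument of $\bar{f}^t$. In particular, the right-hand-side expression depends only on the known data $a^t$ and $\bar{f}^t$, together with the scalar $y$.

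Finally, I would apply this identity to both summands of the bound in Theorem~\ref{th_main}, taking $y = \nu^t(\hat{\mu},\hat{\lambda})$ and $y = \nu^t(\mu^*,\lambda^*)$ respectively. This immediately rewrites the bound as
\begin{equation*}
\sum_{t \in \mathcal{T}} a^t \bar{f}^t\!\bigl((\nabla \bar{f}^t)^{-1}(\nu^t(\hat{\mu},\hat{\lambda}))\bigr) - \sum_{t \in \mathcal{T}} a^t \bar{f}^t\!\bigl((\nabla \bar{f}^t)^{-1}(\nu^t(\mu^*,\lambda^*))\bigr),
\end{equation*}
which is the desired expression. Since $\nu^t(\mu,\lambda)$ is defined purely in terms of the multipliers (see the definition preceding Equation~(\ref{eq_local_L_sub})), the refined bound no longer involves any of the uncertain parameters $b^t$.
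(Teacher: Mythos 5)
Your proposal is correct and follows essentially the same route as the paper: compute $\nabla f^t(x^t) = \nabla \bar{f}^t(\tfrac{x^t}{a^t} + b^t)$, invert to get $(\nabla f^t)^{-1}(\delta) = a^t\bigl((\nabla \bar{f}^t)^{-1}(\delta) - b^t\bigr)$, observe that $f^t\bigl((\nabla f^t)^{-1}(\delta)\bigr) = a^t \bar{f}^t\bigl((\nabla \bar{f}^t)^{-1}(\delta)\bigr)$ so the $b^t$ cancel, and substitute for $\delta \in \lbrace \nu^t(\hat{\mu},\hat{\lambda}), \nu^t(\mu^*,\lambda^*) \rbrace$. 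No gaps.
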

\begin{proof}
Since $\nabla f^t(x^t) = \nabla \bar{f}^t (\frac{x^t}{a^t} + b^t)$ and $(\nabla f^t)^{-1}(\delta) = a^t((\nabla \bar{f}^t)^{-1}(\delta) - b^t)$ for any $\delta$ in the range of $\nabla f^t$, we have that
\begin{equation*}
f^t((\nabla f^t)^{-1}(\delta)) = f^t(a^t((\nabla \bar{f}^t)^{-1}(\delta) - b^t)) = a^t \bar{f}^t ((\nabla \bar{f}^t)^{-1}(\delta)).
\end{equation*}
The result follows by substituting this expression in the bound of Theorem~\ref{th_main} for $\delta \in \lbrace v^t(\hat{\mu},\hat{\lambda}), v^t(\mu^*,\lambda^*) \rbrace$.
\end{proof}

\begin{corollary}
If $\bar{f}^t(y) = K y^c$ for some $K \in \mathbb{R}_{>0}$ and $c \geq 1$ for all $t \in \mathcal{T}$, the bound in Theorem~\ref{th_main} becomes
\begin{equation*}
K \left(\frac{1}{cK}\right)^{\frac{c}{c-1}} 
\left( \sum_{t \in \mathcal{T}} a^t (\nu^t(\hat{\mu},\hat{\lambda}))^{\frac{c}{c-1}}
-  \sum_{t \in \mathcal{T}} a^t (\nu^t(\mu^*,\lambda^*))^{\frac{c}{c-1}} \right).
\end{equation*}
\label{col_bound_poly}
\end{corollary}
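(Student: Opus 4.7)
The plan is to apply Corollary~\ref{col_main_sep} directly and then carry out a concrete computation of the quantity $\bar{f}^t((\nabla \bar{f}^t)^{-1}(\nu))$ under the specific choice $\bar{f}^t(y) = K y^c$. Since the result of Corollary~\ref{col_main_sep} already reduces the bound in Theorem~\ref{th_main} to an expression involving only $a^t$, $\bar{f}^t$, $(\nabla \bar{f}^t)^{-1}$, and the multipliers $\nu^t(\hat{\mu},\hat{\lambda})$ and $\nu^t(\mu^*,\lambda^*)$, the entire task reduces to rewriting $\bar{f}^t \circ (\nabla \bar{f}^t)^{-1}$ in closed form.

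First I would compute the gradient $\nabla \bar{f}^t(y) = c K y^{c-1}$ and invert it to obtain
\begin{equation*}
(\nabla \bar{f}^t)^{-1}(\nu) = \left( \frac{\nu}{cK} \right)^{\frac{1}{c-1}},
\end{equation*}
which is well-defined since we implicitly assume $c>1$ (the case $c=1$ is excluded by the strict convexity hypothesis of Corollary~\ref{col_main_sep}, and also makes the exponent $\tfrac{c}{c-1}$ degenerate). Substituting this into $\bar{f}^t(y) = K y^c$ yields
\begin{equation*}
\bar{f}^t\!\left( (\nabla \bar{f}^t)^{-1}(\nu) \right) = K \left(\frac{\nu}{cK}\right)^{\frac{c}{c-1}} = K \left(\frac{1}{cK}\right)^{\frac{c}{c-1}} \nu^{\frac{c}{c-1}}.
\end{equation*}

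Finally, I would insert this identity into the bound from Corollary~\ref{col_main_sep} for $\nu \in \{ \nu^t(\hat{\mu},\hat{\lambda}), \nu^t(\mu^*,\lambda^*)\}$ and factor the common constant $K \left(\tfrac{1}{cK}\right)^{\frac{c}{c-1}}$ out of both summations. This immediately produces the stated expression.

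There is no real obstacle here: the argument is a one-line substitution followed by factoring. The only subtlety worth mentioning is ensuring that $\nu^t(\hat{\mu},\hat{\lambda})$ and $\nu^t(\mu^*,\lambda^*)$ lie in the range of $\nabla \bar{f}^t$ (so that the inverse is applicable), which follows because $\bar{f}^t$ is increasing and strictly convex on its domain and these values arise as the arguments at which the inner minimization of the local Lagrangian dual function would be solved unconstrained.
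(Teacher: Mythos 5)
Your proof is correct and follows exactly the paper's own route: substitute $(\nabla \bar{f}^t)^{-1}(\delta) = (\delta/(cK))^{1/(c-1)}$ into the bound of Corollary~\ref{col_main_sep} and factor out the constant $K(1/(cK))^{c/(c-1)}$. Your side remark that $c=1$ must implicitly be excluded (for strict convexity and for the exponent $\tfrac{c}{c-1}$ to make sense) is a fair observation about the statement's hypothesis $c\geq 1$, but it does not change the argument.
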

\begin{proof}
Follows by substituting $(\nabla \bar{f}^t)^{-1}(\delta) = \left( \frac{\delta}{cK}\right)^{\frac{1}{c-1}}$ for $\delta \in \lbrace v^t(\hat{\mu},\hat{\lambda}), v^t(\mu^*,\lambda^*) \rbrace$ in the refined bound of Corollary~\ref{col_main_sep}.
\end{proof}
 \begin{corollary}
 If $\bar{f}^t(y) = K e^y$ for some $K \in \mathbb{R}_{>0}$ for all $t \in \mathcal{T}$, the bound in Theorem~\ref{th_main} becomes
 \begin{equation*}
 \sum_{t \in \mathcal{T}} a^t \nu^t(\hat{\mu},\hat{\lambda})
- \sum_{t \in \mathcal{T}} a^t \nu^t(\mu^*,\lambda^*)
 \end{equation*}
 \label{col_bound_exp}
 \end{corollary}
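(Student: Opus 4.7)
The plan is to follow the same recipe used in Corollary~\ref{col_bound_poly}: start from the refined expression in Corollary~\ref{col_main_sep}, compute the inverse of the gradient $(\nabla \bar{f}^t)^{-1}$ for the given exponential form, evaluate $\bar{f}^t$ at that inverse, and substitute. There is no separate structural argument to make; the result is a direct calculation that only uses the exponential-logarithm cancellation.

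First, I would differentiate $\bar{f}^t(y) = K e^y$ to obtain $\nabla \bar{f}^t(y) = K e^y$, which is strictly increasing and maps $\mathbb{R}$ onto $\mathbb{R}_{>0}$. Inverting it yields
\begin{equation*}
(\nabla \bar{f}^t)^{-1}(\delta) = \ln\!\left( \frac{\delta}{K} \right),
\end{equation*}
defined for $\delta > 0$. Then the composition simplifies via
\begin{equation*}
\bar{f}^t\!\left( (\nabla \bar{f}^t)^{-1}(\delta) \right) = K \exp\!\left( \ln\!\left( \frac{\delta}{K} \right) \right) = \delta .
\end{equation*}

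Finally, I would plug in $\delta = \nu^t(\hat{\mu},\hat{\lambda})$ and $\delta = \nu^t(\mu^*,\lambda^*)$ into the bound of Corollary~\ref{col_main_sep}, which immediately collapses to
\begin{equation*}
\sum_{t \in \mathcal{T}} a^t \nu^t(\hat{\mu},\hat{\lambda}) - \sum_{t \in \mathcal{T}} a^t \nu^t(\mu^*,\lambda^*),
\end{equation*}
as claimed. The only genuine subtlety, rather than an obstacle, is making sure both arguments $\nu^t(\hat{\mu},\hat{\lambda})$ and $\nu^t(\mu^*,\lambda^*)$ lie in the domain of $(\nabla \bar{f}^t)^{-1}$, i.e.\ are strictly positive; this is implicitly guaranteed because $\nu^t$ must lie in the range of $\nabla \bar{f}^t = K e^{(\cdot)} > 0$ for the inverse in Corollary~\ref{col_main_sep} to be meaningful in the first place, so no extra hypothesis is needed beyond what the preceding corollary already assumes.
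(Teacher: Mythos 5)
Your proposal is correct and follows essentially the same route as the paper's proof: substitute $(\nabla \bar{f}^t)^{-1}(\delta) = \ln(\delta/K)$ into the refined bound of Corollary~\ref{col_main_sep} and use the cancellation $\bar{f}^t((\nabla \bar{f}^t)^{-1}(\delta)) = \delta$. Your explicit remark on the domain of the inverse gradient is a minor point the paper leaves implicit, but it changes nothing in the argument.
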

 \begin{proof}
 Follows by substituting $(\nabla \bar{f}^t)^{-1}(\delta) = \ln \frac{\delta}{K}$ for $\delta \in \lbrace v^t(\hat{\mu},\hat{\lambda}), v^t(\mu^*,\lambda^*) \rbrace$ in the refined bound of Corollary~\ref{col_main_sep}.
 \end{proof}
 
An alternative bound can be deduced using the fact that both $f^t$ as a function of $x^t$ and $x^t(\mu,\lambda)$ as a function of $(\mu,\lambda)$ are Lipschitz continuous (see, e.g., \cite{Robinson1980,Shvartsman2012}). This yields a bound of the form $K ||(\hat{\mu},\hat{\lambda}) - (\mu^*,\lambda^*)||_2$ where $K$ is a constant that depends on the Lipschitz constants of each function $f^t$ and $x^t(\mu,\lambda)$. This bound suggests that the difference in objective values grows linearly in the 2-norm of the difference between the predicted and optimal Lagrange multipliers. However, this bound depends on the uncertain cost functions via the constant~$K$. Moreover, the linear behavior of the bound is inconsistent with the nature of the gradients for cost functions that are not quadratic. In contrast, our bounds in Theorem~\ref{th_main} and Corollaries~\ref{col_main_sep}-\ref{col_bound_exp} actively incorporate the (gradients of) the specific cost function structure itself.

We conclude this section with a note on whether the result of Theorem~\ref{th_main} could be extended also to instances that do not fall within the class $I_{\text{sub}}$. We observe that Lemma~\ref{lemma_instance} can be extended to broader classes of problems with, e.g., multi-dimensional stage vectors, nonlinear and non-submodular inequality constraints, and nonseparable objectives (see, e.g., \cite{SchootUiterkamp2019a}). This leads us to the question whether one can also extend Lemmas~\ref{lemma_tight} and~\ref{lemma_prop_2} to broader classes of problems. Since the proofs of Lemmas~\ref{lemma_tight} and~\ref{lemma_prop_2} rely heavily on the submodular constraint structure (as opposed to the proof of Lemma~\ref{lemma_instance}), it seems unlikely that these lemmas can be extended to problem instances with non-submodular constraints. However, the incorporation of other extensions such as multi-dimensional stage vectors and nonseparable objectives is not limited by the submodular constraint structure and it may thus be very well possible to extend the result to such problem classes.

\section{Evaluation}
\label{sec_eval}

In this section, we evaluate the performance of the ODDO-framework on two types of optimization problems. We introduce and formulate these problems in Sections~\ref{sec_battery} and~\ref{sec_IM} respectively. Subsequently, in Section~\ref{sec_setup}, we explain the goal and setup of our evaluation, including our approach for predicting the optimal Lagrange multipliers. Finally, in Section~\ref{sec_results}, we present and discuss the results of our evaluations.

\subsection{Battery scheduling}
\label{sec_battery}
Batteries play a crucial role in current and future envisioned micro-grids. On the one hand, they offer the flexibility to store a surplus of produced renewable energy from, e.g., solar panels. On the other hand, they can provide energy to consumers in the micro-grid when the currently produced renewable energy is insufficient to meet the demand. In this way, they help to minimize the amount of energy that has to be exchanged with the main distribution grid and thus help to maximize self-consumption.

We consider a micro-grid with a single battery. The battery scheduling problem can be formulated as follows. We schedule the charging and discharging of the battery over a finite horizon of $T$ equidistant time intervals, each of length~$\Delta t$. At the beginning of each time interval~$t$, we have to decide at which rate~$x^t$ the battery will charge during this interval. This rate $x^t$ is restricted by the minimum and maximum charging rates~$l^t$ and $u^t$ of the battery. Furthermore, the amount of energy that can be stored within the battery, the state-of-charge (SoC), cannot exceed the minimum and maximum capacity of the battery. We denote for interval~$t \in \mathcal{T} = \lbrace 1,\ldots,T \rbrace$ this minimum and maximum allowed SoC by the constants $\underline{C}^t$ and $\bar{C}^t$ respectively. Finally, since the operation of the battery does not stop at the end of the scheduling horizon, we specify a desired SoC at the end of this horizon.We express this by setting $\underline{C}^T = \bar{C}^T = C$ for some constant $C$. 

The goal in the battery scheduling problem is to charge the battery such that the interaction of the local grid with the main grid is minimized. Additionally, we aim to distribute the remaining exchange equally over time to minimize peak consumption, which in turn reduces the energy losses in the system and the stress put on grid assets such as transformers. To model this, we choose as objective function the sum of squares of the net exchange with the main grid. This leads to the objective function $\sum_{t \in \mathcal{T}} (p^t + x^t)^2$, where $p^t$ denotes the net consumption within the micro-grid excluding the battery, i.e., the energy consumption minus the energy production, during time interval~$t$. This net consumption is the uncertain parameter in the scheduling problem as we do not know the vector $p := (p^t)_{t \in \mathcal{T}}$ in advance, but instead only learn each value $p^t$ at the start of the corresponding interval~$t$ by, e.g., measuring the net consumption at the transformer. Summarizing, we consider the following optimization problem:
\begin{subequations}
\begin{align}
\text{BATTERY} \ : \ \min_{x^1,\ldots,x^T} \ & \sum_{t \in \mathcal{T}} (p^t + x^t)^2 \\
\text{s.t. } & \underline{C}^t \leq \Delta t \sum_{s=1}^t x^s \leq \bar{C}^t, \quad t \in \mathcal{T} \backslash \lbrace T \rbrace, \label{battery_02}\\
& \Delta t \sum_{t \in \mathcal{T}} x^t = C, \label{battery_03} \\
& l^t \leq x^t \leq u^t, \quad t \in \mathcal{T}. \label{battery_04}
\end{align} \label{battery}%
\end{subequations}

We note that BATTERY belongs to the problem class $I_{\text{sub}}$ defined in Section~\ref{sec_rob_errors} when $p^t + l^t \geq 0$ for each $t \in \mathcal{T}$. This is because the lower bound inequality constraints~(\ref{battery_02}) can be rewritten to upper bound inequality constraints using Constraint~(\ref{battery_03}). The sets corresponding to these and the original upper bound inequality constraints~(\ref{battery_02}) form a so-called cross-free family, which directly implies that the feasible set defined by these constraints is a base polyhedron (see \cite{Fujishige1984,Fujishige2005}). Also, note that the bound in Corollary~\ref{col_bound_poly} applies to this problem by choosing $K = 1$, $c=2$, $a^t = 1$, and $b^t = p^t$ for all $t \in \mathcal{T}$.

For each $t \in \mathcal{T} \backslash \lbrace T \rbrace$, let $\mu^-_{t}$ and $\mu^+_{t}$ denote the Lagrange multipliers for the lower bound and upper bound constraint in (\ref{battery_02}) respectively. The Lagrangian dual function $q_{\text{BATTERY}}(\mu,\lambda)$ of Problem~BATTERY can be written as
\begin{align*}
q_{\text{BATTERY}}(\mu,\lambda) := \min_{x^1,\ldots,x^T} 
\left( \begin{array}{l}
\sum_{t \in \mathcal{T}} (p^t + x^t)^2 + \sum_{t=1}^{T-1} \mu^+_{t} \left(\sum_{s=1}^t x^s - \frac{\bar{C}^t}{\Delta t} \right) \\
\quad + \sum_{t=1}^{T-1} \mu^-_{t} \left(\frac{\underline{C}^t}{\Delta t} - \sum_{s=1}^t x^s \right) + \lambda \sum_{t \in \mathcal{T}} x^t \\
\text{s.t. } l^t \leq x^t \leq u^t, \quad t \in \mathcal{T} \end{array} \right).
\end{align*}
Thus, the local Lagrangian dual function $q^t_{\text{BATTERY}}(\mu,\lambda)$ can be written for $t \in \mathcal{T} \backslash \lbrace T \rbrace$ as
\begin{equation*}
q^t_{\text{BATTERY}}(\mu,\lambda) := \min_{x^t} 
\left( 
 (p^t + x^t)^2 + \sum_{s=t}^{T-1} (\mu^+_{s} - \mu^-_{s}) x^t + \lambda x^t \ \middle| \ 
 l^t \leq x^t \leq u^t \right)
\end{equation*}
and for $t = T$ as
\begin{equation*}
q^T_{\text{BATTERY}}(\mu,\lambda) := \min_{x^T} 
\left( 
 (p^T + x^T)^2 + \lambda x^T \ \middle| \ 
 l^T \leq x^T \leq u^T \right) .
\end{equation*}

We mentioned in Section~\ref{sec_approach} that, for some cases, the structure of the constraints can be exploited to compute the projection set $\mathcal{C}^t_{\text{proj}}$ efficiently. This is also the case for Problem~BATTERY. More precisely, applying FME to these constraints yields the projection for all variables in $O(T)$ arithmetical operations that can be done at the start of the first time interval (see Appendix~\ref{app_battery}). As a consequence, we do not need to solve at the start of each time interval an entire instance of Problem~BATTERY but can instead use the pre-calculated sets $\mathcal{C}^t_{\text{proj}}$. This saves a lot of computation time, which is crucial for using the algorithm on embedded systems in energy management applications with low computational power.

In our evaluation, we consider the problem of scheduling the battery over one day divided into 15-minute time intervals (i.e., $T = 96$). This interval length is chosen with regard to contract durations within several European energy market (see, e.g., \cite{Markle-Hus2018}). Furthermore, we assume that the initial and desired SoC is set to 50\% of the capacity. For the uncertain consumption $p$, we use real consumption data of 72 households that were obtained in a field test in the Dutch town of Heeten \cite{Reijnders2018}. We base the battery parameters on the setting in the same field test. More precisely, we choose $l^t = -8.67 \cdot 10^3$ and $u^t = 8.67 \cdot 10^3$ for all $t \in \mathcal{T}$, $\underline{C}^t = -5.89 \cdot 10^3$ and $\bar{C}^t = 5.89 \cdot 10^3$ for all $t < T$, $\underline{C}^T = \bar{C}^T = 0$, and $\Delta t = \frac{1}{4}$. We note that, given the used consumption profile $p$, the resulting problem instance satisfies the assumptions of Theorem~\ref{th_main}.

\subsection{Inventory management}
\label{sec_IM}

As second problem, we consider a basic inventory management problem that was considered in \cite{Ben-Tal2004} to evaluate the performance of adjustable RO for linear programs. This model is a single product inventory system that consists of a warehouse and multiple factories. The goal is to minimize the production cost over all factories while satisfying the required demand of the product for each stage.

We denote the set of factories by $\mathcal{N}$ and for each stage~$t \in \mathcal{T}$ and factory~$i \in \mathcal{N}$, the variable $x^t_i$ denotes the amount of (divisible) product that factory~$i$ produces during stage~$t$. Moreover, we denote by $c^t_i$ the cost of producing one unit of the product at factory~$i$ during stage~$t$. For each stage~$t \in \mathcal{T}$, there is a demand~$d^t$ for the product that must be satisfied. At the start of the first stage, an amount of $S$ is already present in the warehouse. Furthermore, the minimum required level of stock in the warehouse is $L$ and the maximum capacity is $U$. Finally, each factory~$i \in \mathcal{N}$ has a maximum production capacity of $u^t_i$ for stage~$t \in \mathcal{T}$ and a maximum total production capacity of $C_i$ for the entire horizon, i.e., $\sum_{t \in \mathcal{T}} x^t_i \leq C_i$. Summarizing, we get the following optimization problem:

\begin{subequations}
\begin{align}
\text{IM} \ : \ \min_{x^1,\ldots, x^T} \ & \sum_{t \in \mathcal{T}} \sum_{i \in \mathcal{N}} c^t_i x^t_i \\
\text{s.t. } & \sum_{t \in \mathcal{T}} x^t_i \leq C_i, \quad i \in \mathcal{N}, \label{IM_02} \\
& L \leq S + \sum_{s=1}^t \sum_{i \in \mathcal{N}} x^s_i - \sum_{s=1}^t d^s \leq U, \quad t \in \mathcal{T}, \label{IM_03} \\
& 0 \leq x^t_i \leq u^t_n, \quad t \in \mathcal{T}, \ i \in \mathcal{N}.
\end{align} %
\end{subequations}
In \cite{Ben-Tal2004}, the demand was taken as the uncertain parameter. Here, we instead assume the demand to be certain and the production costs to be uncertain. 

Let $\mu' := (\mu'_i)_{i \in \mathcal{N}}$, $\mu^- := (\mu^-_{t})_{t \in \mathcal{T}}$, and $\mu^+ := (\mu^+_{t})_{t \in \mathcal{T}}$ denote the Lagrange multipliers corresponding to Constraints~(\ref{IM_02}) and the lower and upper bound Constraints~(\ref{IM_03}) respectively. The Lagrangian dual function $q_{\text{IM}}(\mu)$ of Problem~IM can be written as
\begin{equation*}
q_{\text{IM}}(\mu) := \min_{x^1,\ldots,x^T} \ \left( \begin{array}{l}
\sum_{t \in \mathcal{T}} \sum_{i \in \mathcal{N}} c^t_i x^t_i + \sum_{i \in \mathcal{N}} \mu'_i \left( \sum_{t \in \mathcal{T}} x^t_i - C_i \right) \\
\quad
+ \sum_{t \in \mathcal{T}} \mu^+_{t} \left( S + \sum_{s=1}^t \sum_{i \in \mathcal{N}} x^s_i - \sum_{s=1}^t d^s - U \right) \\
\quad
+ \sum_{t \in \mathcal{T}} \mu^-_{t} \left( - S - \sum_{s=1}^t \sum_{i \in \mathcal{N}} x^s_i + \sum_{s=1}^t d^s + L  \right) \\
\text{s.t. } 0 \leq x^t_i \leq u^t_i, \quad t\in \mathcal{T}, i \in \mathcal{N}.
\end{array}
\right),
\end{equation*}
and the local Lagrangian dual function $q^t_{\text{IM}}(\mu)$ for stage $t \in \mathcal{T}$ is given by
\begin{equation*}
q^t_{\text{IM}}(\mu) := \min_{x^t} \left(
\sum_{i \in \mathcal{N}} \left( c^t_i + \mu'_i + \sum_{s=t}^T (\mu^+_{s} - \mu^-_{s}) \right) x^t_i
\ \middle| \ 
0 \leq x^t_i \leq u^t_i, \ i \in \mathcal{N}
\right).
\end{equation*}

For our evaluations, we choose the same parameter values as in \cite{Ben-Tal2004} that we repeat here for the sake of self-containedness. We consider a horizon of 48 weeks, where a production decision has to be made every two weeks (i.e., $T=24$). Moreover, there are $N = 3$ factories and the seasonal demand is given by
\begin{equation*}
d^t = 1000 \left( 1 + \frac{1}{2} \sin \left(\frac{\pi(t-1)}{12} \right) \right), \quad t \in \mathcal{T}.
\end{equation*}
The maximum production capacity per two-week period $u^t_i$ of each factory $i \in \mathcal{N}$ is 567 and the maximum production capacity over the entire horizon $C_i$ is 13,600. The minimum and maximum level of inventory at the warehouse is $L = 500$ and $U = 2000$ and the initial stock level $S$ is 500. Finally, we generate random production cost data for each factory $i \in \mathcal{N}$ by drawing each $c^t_i$ uniformly from the interval $[0.8\bar{c}^t_i , 1.2\bar{c}^t_i]$, where $\bar{c}^t_i$ is the expected value of the cost and given by
\begin{equation*}
\bar{c}^t_i = \bar{E}_i \left( 1 -\frac{1}{2} \sin \left(\frac{\pi(t-1)}{12} \right)\right), \quad t \in \mathcal{T},
\end{equation*}
where $\bar{E} := (1,1.5,2)$.

\subsection{Setup of the evaluation}
\label{sec_setup}

For both problem types introduced in Sections~\ref{sec_battery} and~\ref{sec_IM}, we study the performance of ODDO. In the case of Problem~BATTERY, for a given test instance, the corresponding ``training'' instances are created by taking the household consumption data of a given number of previous days as input for the cost functions. In the case of Problem~IM, the ``training'' instances are created by taking as input for the uncertain parameters in the objective function randomly generated values according to the distributions specified in Section~\ref{sec_IM}. For each training instance, we compute the optimal Lagrange multipliers. If for a given instance the optimal Lagrange multipliers are not unique (see Section~\ref{sec_Lagrange}), we take an arbitrary vector of optimal Lagrange multipliers. These values serve as the training data for predicting the optimal Lagrange multipliers of future instances. 

We carry out two sets of evaluations. First, we study the performance of four types of candidate multiplier vectors. For this, for a given test instance, let $\mathcal{H}$ denote the set of optimal Lagrange multipliers to the corresponding training instances. As four candidate vectors we choose the minimum, maximum, mean and median of the multipliers in $\mathcal{H}$. More precisely, for a given set $\mathcal{H}$, we compute the four candidate multiplier vectors $C^{\min} \equiv (\hat{\mu}^{\min},\hat{\lambda}^{\min})$, $C^{\max} \equiv (\hat{\mu}^{\max},\hat{\lambda}^{\max})$, $C^{\text{mean}} \equiv (\hat{\mu}^{\text{mean}},\hat{\lambda}^{\text{mean}})$, and $C^{\text{med}} \equiv(\hat{\mu}^{\text{med}},\hat{\lambda}^{\text{med}})$ by
\begin{align*}
C^{\min} \ : \ &&
\hat{\mu}^{\min}_j
&:= \min_{(\mu,\lambda) \in \mathcal{H}} \mu_j,
&\hat{\lambda}^{\min}_k
&:= \min_{(\mu,\lambda) \in \mathcal{H}} \lambda_k, \\
C^{\max} \ : \ &&
\hat{\mu}^{\max}_j
&:= \max_{(\mu,\lambda) \in \mathcal{H}} \mu_j, 
&\hat{\lambda}^{\max}_k
&:= \max_{(\mu,\lambda) \in \mathcal{H}} \lambda_k \\
C^{\text{mean}} \ : \ &&
\hat{\mu}^{\text{mean}}_j
&:= \frac{1}{|\mathcal{H}|} \sum_{(\mu,\lambda) \in \mathcal{H}} \mu_j,
&\hat{\lambda}^{\text{mean}} _k
&:= \frac{1}{|\mathcal{H}|} \sum_{(\mu,\lambda) \in \mathcal{H}} \lambda_k, \\
C^{\text{med}} \ : \ &&
\hat{\mu}^{\text{med}}_j
&:= \text{median}\lbrace \mu_j \ | \ (\mu,\lambda) \in \mathcal{H} \rbrace,
&\hat{\lambda}^{\text{med}}_k 
&:= \text{median}\lbrace \lambda_k \ | \ (\mu,\lambda) \in \mathcal{H} \rbrace.
\end{align*}
We consider two different sizes for the set of training instances, namely 10 and 50. Thus, for each test instance, we get two values for each of the four candidate multiplier vectors: one that corresponds to $|\mathcal{H}| = 10$ and one that corresponds to $|\mathcal{H}| = 50$. Overall, we evaluate the performance of these eight candidate values for ten test instances.

In the second set of evaluations, we assess the additional value of knowledge of the optimal Lagrange multipliers over knowledge of the uncertain data itself. To this end, we compare ODDO to a simple strategy that resembles optimization over nominal values. In this strategy, we obtain the online solution to a given test instance by replacing the uncertain parameters in the cost functions by the mean of the uncertain parameters of the training instances and solving the resulting deterministic optimization problem. More precisely, for a given test instance, let $\mathcal{P}$ denote the set of cost function parameters for the training instances. Subsequently, we solve the test instance where for Problem~BATTERY we take as value for the uncertain cost function parameters the values
\begin{equation*}
\hat{p}^t := \frac{1}{|\mathcal{P}|} \sum_{p \in \mathcal{P}} p
\end{equation*}
and for Problem~IM the values
\begin{equation*}
\hat{c}^t_i := \frac{1}{|\mathcal{P}|} \sum_{c \in \mathcal{P}} c^t_i, \quad i \in \lbrace 1,2,3 \rbrace,
\end{equation*}
where $c := (c^t_1, c^t_2, c^t_3)^{\top}_{t \in \mathcal{T}}$. We call this strategy the \emph{nominal} strategy since it represents optimization over the nominal values of the uncertain data. Alternatively, one could see this strategy as a simplified version of model predictive control (see, e.g., \cite{Grune2017}). We compare this nominal approach to ODDO where the multiplier prediction is taken as the optimal Lagrange multipliers of the problem solved in the nominal strategy, i.e., of the problem instance where the uncertain parameters are substituted by the values $\hat{p} := (\hat{p}^t)_{t \in \mathcal{T}}$ or $\hat{c} := (\hat{c}^t_1, \hat{c}^t_2, \hat{c}^t_3)^{\top}_{t \in \mathcal{T}}$. In this comparison, we consider four different sizes for the set of training instances, namely 1,~3,~5, and~10. We carry out the comparison for 50 problem instances.

\subsection{Results}
\label{sec_results}

In this section, we present and discuss the results of the evaluation as described in Section~\ref{sec_setup}. All simulations and computations are coded in Matlab using CVX \cite{Grant2008}. For each computed online solution, we calculated the ratio between the online and optimal offline objective value. This ratio serves as a measure for how good the online solution is: a ratio of 1 implies that the online solution is optimal. We discuss these results for the comparison of the four candidate multipliers $C^{\min}$, $C^{\max}$, $C^{\text{mean}}$, and $C^{\text{med}}$ in Section~\ref{sec_results_candidate} and for the comparison of ODDO and the nominal strategy in Section~\ref{sec_results_full}.

\subsubsection{Comparison of the four candidate multipliers}
\label{sec_results_candidate}

Figure~\ref{plot_battery_IM} shows the observed ratios for each of the four candidate multipliers. First, we compare the performance of the candidates~$C^{\min}$ and $C^{\max}$. It follows from Figures~\ref{plot_battery} and~\ref{plot_IM} that in all cases, candidate~$C^{\max}$ yields the worst performance out of all candidates and its ``opposite'' candidate~$C^{\min}$ performs significantly better than $C^{\max}$. This is in line with the robustness analysis in Section~\ref{sec_rob_errors}: when we under-predict the optimal Lagrange multipliers, the difference between the online and optimal offline objective value is bounded. In fact, it suggests that that the robustness result of Theorem~\ref{th_main} might apply to a broader class of instances than the ones specified in the class $I_{\text{sub}}$.

\begin{figure}[ht!]
\centering
\begin{subfigure}[b]{.48\textwidth}
\centering
{\includegraphics{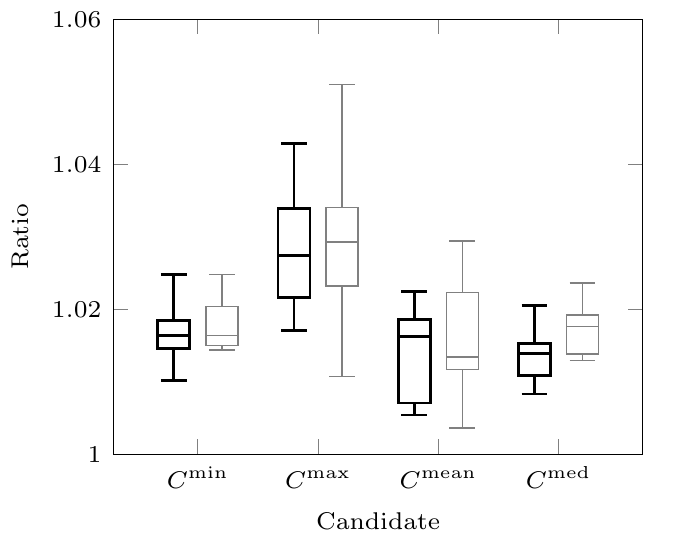}}
\caption{Problem~BATTERY.}
\label{plot_battery}
\end{subfigure}
\hspace{0.02\textwidth}
\begin{subfigure}[b]{.48\textwidth}
\centering
{\includegraphics{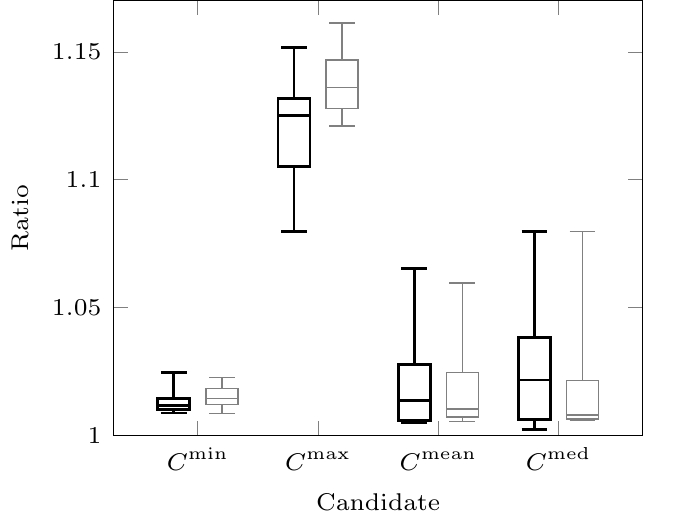}}
\caption{Problem~IM.}
\label{plot_IM}
\end{subfigure}
\caption{Boxplots of the observed ratios for the four candidate pblackictions, where for each candidate the black (left) boxplot corresponds to the case of 10 training instances and the gray (right) boxplot to the case of 50 training instances.}
\label{plot_battery_IM}
\end{figure}

For both problem types BATTERY and IM, at least one candidate has been able to compute an online solution whose objective value was at most 1\% worse than the optimal objective value ($C^{\min}$ for IM and $C^{\text{mean}}$ and $C^{\text{med}}$ for both problems). Moreover, candidates $C^{\text{mean}}$ and $C^{\text{med}}$ have been able to compute for most of their instances an online solution whose objective value was at most 1.77\% (Problem~BATTERY) and 1.24\% (Problem~IM) worse than the optimal objective value. This demonstrates that ODDO is capable of obtaining near-optimal solutions not only incidentally, but also on average in the long term.

In order to obtain information on the overall performance of the candidate multipliers, we focus on their median performance. For most of the considered cases in Problem~BATTERY, Figure~\ref{plot_battery} suggests that the median performance of the candidates $C^{\text{mean}}$ and $C^{\text{med}}$ is better than the median performance of candidates~$C^{\min}$ and $C^{\max}$. An explanation for this is that the candidates $C^{\text{mean}}$ and $C^{\text{med}}$ depend on all training data, whereas the candidate $C^{\min}$ depends solely on the value of (extreme) lower outliers. Thus, $C^{\text{mean}}$ and $C^{\text{med}}$ are more ``balanced'' and thus good candidates for the ``average'' instance.

Finally, we focus on the difference in performance between different sizes for the sets of training instances. Figure~\ref{plot_IM} suggests that, in Problem~IM, the median performance of candidates $C^{\min}$, $C^{\text{mean}}$, and $C^{\text{med}}$ does not differ significantly for the case of 10 training instances and for the case of 50 training instances. Interestingly however, Figure~\ref{plot_battery} indicates that for Problem~BATTERY these candidates seem to perform better in the case of 10 training instances than in the case of 50 training instances. One explanation for this could be that the latter case takes into account household data of approximately three months, whereas the former case is based on data from only the past one-and-a-half week. Since household consumption is heavily influenced by weather (see, e.g., \cite{Kavousian2013}), this means that also the weather from three months ago is taken into account when computing the candidate multiplier vectors. However, this historical weather has little to no influence on the current household consumption and thus also little to no influence on the optimal Lagrange multipliers corresponding to the current day. In earlier work \cite{SchootUiterkamp2018a}, we observed a similar relation between the optimal Lagrange multiplier and the choice of number of training instances.

\subsubsection{Comparison of ODDO to the nominal strategy}
\label{sec_results_full}

Figure~\ref{plot_compare} shows the observed ratios of the online solutions for Problem~BATTERY and Problem~IM for both ODDO and the nominal strategy. Moreover, Table~\ref{tab_MPC} shows the share of problem instances wherein ODDO outperforms the nominal strategy.

\begin{figure}[ht!]
\centering
\begin{subfigure}[t]{.41\textwidth}
\centering
{\includegraphics{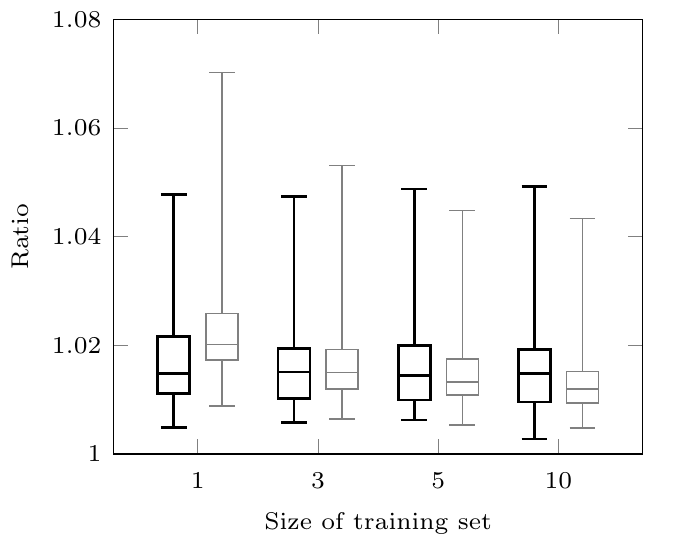}}
\caption{Problem~BATTERY; ODDO (left, black) and the nominal strategy (right, gray).}
\label{plot_battery_compare}
\end{subfigure}
\hspace{0.02\textwidth}
\begin{subfigure}[t]{.225\textwidth}
\centering
{\includegraphics{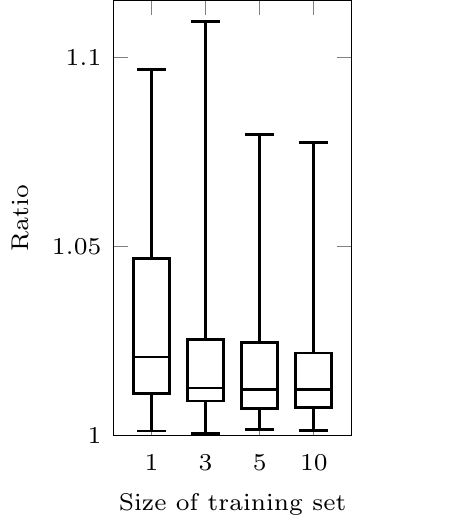}}
\caption{Problem~IM; ODDO.}
\label{plot_IM_compare_ODDO}
\end{subfigure}
\hspace{0.02\textwidth}
\begin{subfigure}[t]{.225\textwidth}
{\includegraphics{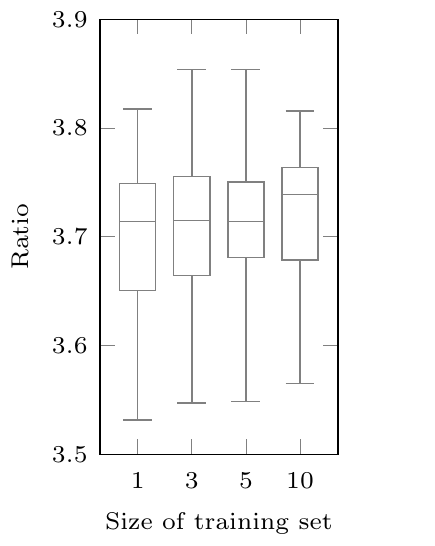}}
\caption{Problem~IM; nominal strategy.}
\label{plot_IM_compare_MPC}
\end{subfigure}

\caption{Boxplots of the observed ratio in the comparison between ODDO and the nominal strategy for each considered training set size.}
\label{plot_compare}
\end{figure}

\begin{table}[ht!]
\centering
\begin{tabular}{r | r r}
\toprule
&  \multicolumn{2}{c}{Success rate} \\
Size of training set & Problem~BATTERY & Problem~IM \\
\midrule
1 & 0.7 & 1\\
3 & 0.54 & 1\\
5 &  0.5 & 1\\
10 & 0.44 & 1\\
\bottomrule
\end{tabular}
\caption{Fraction of test instances wherein ODDO outperforms the nominal strategy.}
\label{tab_MPC}
\end{table}

For Problem~BATTERY, Table~\ref{tab_MPC} suggests that the share of problem instances wherein ODDO outperforms the nominal strategy decreases as the number of training instances increases. However, for Problem~IM, ODDO outperforms the nominal strategy in all cases by a large margin. The main cause of this seems to be the large difference in overall performance of the nominal strategy between Problem~BATTERY and Problem~IM: the ratios of this strategy for Problem~IM are significantly larger than for Problem~BATTERY, whereas the ratios of ODDO are similar for both problems. Additionally, for Problem~IM, in the nominal strategy, the overall behavior of the ratios does not significantly change as the number of training instances increases, whereas for ODDO increasing the number of training instances leads to a rapid decrease and a reduced spread of the ratios. One explanation for this difference is that the uncertain data in Problem~IM are mutually independent both within one instance and between multiple instances. As a consequence, the training instances do not provide additional information on the data structure other than an improved estimate of the variance of the data. Another explanation is that in Problem~IM the relative variance of the uncertain data is larger than in Problem~BATTERY. More precisely, for Problem~BATTERY the (estimated) coefficient of variation (as calculated according to \cite{Albert2010}) of the uncertain data of the 50 test instances is only 0.1018, whereas these coefficients for the uncertain data $c_1 := (c^t_1)_{t \in \mathcal{T}}$, $c_2 := (c^t_2)_{t \in \mathcal{T}}$, and $c_3 := (c^t_3)_{t \in \mathcal{T}}$ of Problem~IM are~0.1450,~0.2175, and~0.2900 respectively. This supports the claim made in Section~\ref{sec_mot} that the performance of ODDO is influenced only limitedly by large variances and unanticipated realizations of the uncertain data.

Lastly, we focus on the influence of the number of training instances on the performance of both ODDO and the nominal strategy. On the one hand, for ODDO, the results in Figures~\ref{plot_battery_compare} and~\ref{plot_IM_compare_ODDO} indicate that the ratios are only marginally affected by the chosen number of training instances. On the other hand, for the nominal strategy, Figure~\ref{plot_battery_compare} suggests that the ratios for Problem~BATTERY decrease with an increasing number of training instances and become competitive with ODDO only when the number of training instances is five or more. This suggests that ODDO can obtain good online solutions with a small training set and that these solutions are better than those produced by the nominal strategy.

Summarizing, the evaluation in this section shows that ODDO is able to achieve near-optimal results using relatively easy-to-compute candidates as input for both problems with real (Problem~BATTERY) and randomly generated (Problem~IM) data. Moreover, the results for Problem~IM indicate that ODDO performs well in practice also for problems that do not fall into the class $I_{\text{sub}}$. This suggests the existence of a wider class of problems for which ODDO yields good online solutions. We plan to further investigate such a broader class in future research.

\section{Discussion}
\label{sec_disc}

In this section, we discuss some limitations and possible extensions of the ODDO-framework. We provide several suggestions for overcoming part of these limitations and point out several interesting and necessary directions of future research.

\subsection{High-dimensional dual spaces}
In part, ODDO exploits the fact that, for many problems from practice, the dimension of the dual space is smaller than the dimension of the primal space. However, there are optimization problems where this is not the case, e.g., where the number of constraints is exponential in the number of variables. In some cases, though, we can reduce the number of to-be-predicted values by aggregating some of the dual variables. For example, suppose that each of the functions $g^t_j$ and $h^t_k$ is a constant multiple of some function $\tilde{g}^t$ and $\tilde{h}^t$ respectively, i.e., $g^t_{j} (x^t) = \tilde{a}^t_j \tilde{g}^t(x^t)$ and $h^t_k (x^t) = \tilde{b}^t_k h^t (x^t)$ for some $\tilde{g}^t_j, \tilde{h}^t_k \geq 0$. Note that this includes the case where the functions $g^t_j$ are linear. Then the local Lagrangian is given by
\begin{equation*}
L^t(x^t,\mu,\lambda) = f^t(x^t) + \sum_{j \in \mathcal{M}} \mu_j \tilde{a}^t_j \tilde{g}^t (x^t) + \sum_{k \in \mathcal{L}} \lambda_k \tilde{b}^t_k h^t(x^t).
\end{equation*}
Observe that predictions of the aggregated terms $ \sum_{j \in \mathcal{M}} \mu_j \tilde{a}^t_j$ and $\sum_{k \in \mathcal{L}} \lambda_k \tilde{b}^t_k$ instead of the individual optimal Lagrange multipliers $(\mu,\lambda)$ are sufficient to solve the online problems $(P^{\bar{t}}(\mu,\lambda))$ and $(P^{\bar{t}}_{\text{proj}}(\mu,\lambda))$. Thus, this allows to reduce the number of to-be-predicted values from $|\mathcal{M}| + |\mathcal{L}| = O\left(2^T\right)$ to $2T$.

\subsection{Uncertainty in the constraints}
In our research, we focus on optimization problems where only the objective function is uncertain. In contrast, the existing paradigms of robust optimization and stochastic programming can accommodate uncertainty in both the objective and the constraints. Note that the structure of Problem~\prob\ allows for problems where during each stage $t \in \mathcal{T}$ the constraint functions $g^t_j$ and $h^t_k$ are revealed. However, in the current framework, the corresponding online decision for this stage cannot be determined such that future feasibility is preserved because the constraint functions for the future stages are unknown. To overcome this problem, one might be able to approximate the uncertain constraints by using techniques from, e.g., stochastic programming, (adjustable) robust optimization, or online convex optimization, depending on which information on the type of constraint uncertainty is available. Combining such an approximation with ODDO is an interesting direction for future research.

\subsection{Problem structure}

Our approach requires the feasible set of Problem~\prob\ to be convex and the cost functions $f^t$ to be continuously differentiable and \emph{strictly} convex. This restriction ensures that properties such as strong duality hold and enables us to derive the robustness results of Section~\ref{sec_rob}. In this section, we briefly consider how the validity of these properties and results are affected when we relax some of the restrictions on the feasible set and the cost functions.

First, suppose that one or more of the cost functions are not \emph{strictly} convex and / or not continuously differentiable. We already considered such a problem, namely Problem~IM. In this case, it can happen that not only the optimal Lagrange multipliers might be non-unique, but also the optimal primal solution. Thus, when solving a subproblem during the online optimization process, we might have to choose from a set of multiple online solutions. In our evaluation, we did not include any preference for or requirement of an online solution from this set, but simply chose the solution computed by the solver. It would be interesting to investigate under which conditions it is useful to incorporate such a preference or requirement and what the effect of this is on the overall online optimization process.

Second, suppose that one or more of the cost functions are not convex. As a consequence, strong duality is not guaranteed anymore, which implies that there might not exist Lagrange multipliers $(\mu^*,\lambda^*)$ such that $x(\mu^*,\lambda^*)$ is optimal for the original Problem~\prob. However, it is known that strong duality holds for several optimization problems with a nonconvex objective function, in particular for several problems related to energy management (see, e.g., \cite{Flores-Bazan2014}, \cite{Lavaei2012}). One direction for future research is to apply ODDO to these types of problems and, e.g., asses the validity of the robustness results of Section~\ref{sec_rob_errors} for these problems.

Third, suppose that one or more of the decision variables is binary or integer. This implies that the feasible region of Problem~\prob\ is not convex. In the area of energy management, binary and integer decision variables often arise when a device has several operation modes, e.g., an EV that has a finite number of possible charging rates (see, e.g., \cite{Yilmaz2013}). Recently, we derived an optimal solution characterization and an online optimization approach similar to ODDO specifically for EV scheduling with binary state-switching variables by exploiting the specific structure of the problem  \cite{SchootUiterkamp2018b}. It would be interesting to gain more insight into why this structure could be exploited and how this result can be extended to other (mixed-)integer optimization problems.

\subsection{Prediction of the optimal Lagrange multipliers}
An important direction for future research is to find a structural and unifying approach to predict the optimal Lagrange multipliers in our approach. In Section~\ref{sec_results_candidate}, we have shown that simple statistics such as the mean and median of previously observed optimal Lagrange multipliers can in practice perform well as multiplier predictions. However, for more general problems and, in particular, problems with multiple equality constraints, their predictive quality decreases significantly. Thus, research is needed on the behavior of the optimal Lagrange multipliers and on a more sophisticated prediction approach that is suitable for more general problem instances.

One possible starting point for this is to use the analysis in Section~\ref{sec_rob} and in particular Theorem~\ref{th_main} to obtain a measure of what constitutes a good (enough) prediction. This analysis suggests a preference for under-predicting the optimal Lagrange multipliers rather than over-predicting, since in the former case one can bound the difference in objective between the online and optimal offline solution. The evaluation results in Section~\ref{sec_results} confirm this preference, also for problem instances that do not satisfy the requirements of, e.g., Theorem~\ref{th_main}. Predicting values such that they are preferably beneath a (given) threshold value corresponds to the concept of quantile functions in statistics \cite{Serfling2002}. In earlier work on scheduling the charging of EVs \cite{SchootUiterkamp2018a}, we used this concept to successfully predict optimal Lagrange multipliers (also called ``fill-levels'' in this application). Thus, it is worthwhile to investigate the possibility of generalizing this approach to the general Problem~\prob.

Another possible starting point, in particular for the subclass $I_{\text{sub}}$, is to exploit the problem structure and thereby find a useful relation between the optimal Lagrange multipliers and (a function of) the uncertain data. As an example of this, we consider the resource allocation problem RAP from Section~\ref{sec_Lagrange} where the cost functions are of the form $a^i \bar{f} (\frac{x^i}{a^i} + b^i)$ for a known convex differentiable function $\bar{f}$, a known vector $a \in \mathbb{R}^n_{>0}$, and an uncertain vector $b \in \mathbb{R}^n$ (see also Section~\ref{sec_rob}). If in the optimal solution we have that each variable is strictly in between its bounds, i.e., we have $l^i < x^i(\mu < u^i$ for all $i \in \lbrace 1,\ldots, n \rbrace$, the optimal solution is equal to the local Lagrangian solution evaluated for the optimal Lagrange multiplier~$\lambda^*$, i.e., $x^i(\lambda^*) = a^i ((\nabla \bar{f})^{-1}(\lambda^*) - b^i)$ (see also the proof of Corollary~\ref{col_main_sep}). Summing the terms $x^i(\lambda^*)$ over $i$ yields
\begin{equation*}
R = \sum_{i = 1}^n x^i (\lambda^*)
= \sum_{i=1}^n a^i ((\nabla \bar{f})^{-1}(\lambda^*) - b^i)
= (\nabla \bar{f})^{-1}(\lambda^*) \sum_{i=1}^n a^i - \sum_{i=1}^n a^i b^i.
\end{equation*}
It follows that
\begin{equation}
\lambda^* = \nabla \bar{f} \left( \frac{R + \sum_{i=1}^n a^i b^i}{\sum_{i=1}^n a^i} \right) .
\label{eq_cand_RAP}
\end{equation}
Note that $\lambda^*$ does not depend explicitly on each individual element of the uncertain vector $b$ but only on the aggregated term $\sum_{i=1}^n a^i b^i$. This suggests that we can obtain a promising prediction for $\lambda^*$ using a prediction of this aggregated term via Equation~(\ref{eq_cand_RAP}) instead of a prediction of each individual element of~$b$. By the law of large numbers, accurately predicting such an aggregated term is in general easier than predicting each individual element. For instance, in the Problem BATTERY, the aggregated term $\sum_{t \in \mathcal{T}} a^t b^t$ corresponds to the average power consumption over the intervals $\mathcal{T}$. This average power consumption is in general significantly easier to predict than the power consumption of each individual interval (see, e.g., \cite{Javed2012}).

The relation between the optimal Lagrange multiplier and the uncertain vector~$b$ in Equation~(\ref{eq_cand_RAP}) is particularly interesting for the problem class $I_{\text{sub}}$. This is because for each problem of this class, there exists a partition of the decision variables such that the problem can be equivalently decomposed into a collection of RAPs, one over each element of the partition (see, e.g., \cite{Fujishige2005}). This partition corresponds directly with the set of constraints that are tight in the optimal primal solution and thereby with the set of nonzero multipliers in the optimal Lagrange multiplier vector. If we are able to accurately predict this partition, we can also obtain a promising prediction of the optimal Lagrange multiplier for each RAP subproblem using Equation~(\ref{eq_cand_RAP}).

\section{Conclusions}
\label{sec_concl}

We presented a new framework for optimization under uncertainty called ``Online Duality-Driven Optimization'' (ODDO). This framework is motivated by applications in energy management for micro-grids, where peak energy consumption needs to be minimized in order to maintain a proper operation of the micro-grid. The presented approach does not require any quantitative assumptions on the uncertain data involved in the problem such as uncertainty sets or probability distributions. The key idea of the framework is to predict the optimal Lagrange multipliers of the optimization problem instead of the actual uncertain data or objective function. We analyzed the robustness of this approach both in theory and in practice. For a specific but important class of problems, we derived bounds on the difference in objective value between the online and optimal offline solution. Moreover, evaluations suggest that in practice this robustness carries over to problems that fall outside of this class. For the studied problems, simple statistics such as the mean and median of previously observed optimal Lagrange multipliers seem to perform well as input predictions for the ODDO-framework. 

In future work, we aim to improve and extend the scope of the ODDO-framework according to the aspects in Sections~\ref{sec_results} and~\ref{sec_disc}. Apart from the current work on the framework itself, we plan to apply the framework to other problems in energy management for micro-grids. Summarizing, we believe that the ODDO-framework is a promising addition to the set of paradigms for optimization under uncertainty and provides the community with a new approach to tackle these types of problems.

\section*{Acknowledgments}
The authors would like to thank Dick den Hertog and Pierre Pinson for their comments on and suggestions for the positioning of the main idea in this article. This research has been conducted within the SIMPS project (647.002.003) supported by NWO and Eneco.

\appendix

\section{Proofs}

\subsection{Proof of Lemma~\ref{lemma_tight}}
\label{app_lemma_tight}

\begin{lemma_tight}
For any instance in $I_{\text{sub}}$, a given $\bar{t} \in \mathcal{T}$, and given multipliers $(\mu,\lambda)$ and corresponding online solution $\hat{x}^{\bar{t}}(\mu,\lambda)$ and local Lagrangian solution $x^{\bar{t}}(\mu,\lambda)$, we have:
\begin{equation*}
\hat{x}^{\bar{t}}(\mu,\lambda) < x^{\bar{t}}(\mu,\lambda)
\Longrightarrow 
\exists \bar{\mathcal{S}}^{\bar{t}} \subseteq \left\{ 1,\ldots,\bar{t} \right\} \text{ with } \bar{\mathcal{S}}^{\bar{t}} \ni \bar{t} \text{ such that } \sum_{t \in \bar{\mathcal{S}}^{\bar{t}}} \hat{x}^t(\mu,\lambda) = r\left(\bar{\mathcal{S}}^{\bar{t}} \right).
\end{equation*}
\end{lemma_tight}

\begin{proof}

Suppose that $\hat{x}^{\bar{t}} (\mu,\lambda) < x^{\bar{t}} (\mu,\lambda)$. Since the function $f^{\bar{t}}$ is \emph{strictly} convex, the local Lagrangian solution $x^{\bar{t}}(\mu,\lambda)$ is the \emph{unique} optimal solution to the inner problem of the local Lagrangian dual function~$q^{\bar{t}}(\mu,\lambda)$ and thus $L^{\bar{t}} (x^{\bar{t}}(\mu,\lambda),\mu,\lambda) < L^{\bar{t}} (\hat{x}^{\bar{t}}(\mu,\lambda),\mu,\lambda)$. This implies that $L^{\bar{t}} (\cdot,\mu,\lambda)$ is decreasing on the interval $[\hat{x}^{\bar{t}}(\mu,\lambda) , x^{\bar{t}} (\mu,\lambda)]$ since any solution in this interval is feasible for the inner problem of $q^{\bar{t}}(\mu,\lambda)$ and $x^{\bar{t}}(\mu,\lambda)$ is optimal for this problem. 

Consider a vector $y \in \mathbb{R}^T$ with $y^t = \hat{x}^t(\mu,\lambda)$ for $t < \bar{t}$ and $y^{\bar{t}} \in ( \hat{x}^{\bar{t}}(\mu,\lambda), x^{\bar{t}}(\mu,\lambda))$. If this vector would have been a feasible solution, the value $y^{\bar{t}}$ had been chosen as online solution for stage~$\bar{t}$ since $L^{\bar{t}}(y^{\bar{t}},\mu,\lambda) < L^{\bar{t}}(\hat{x}^{\bar{t}},\mu,\lambda)$. In particular, this means that there is no $\epsilon \in (0 , x^{\bar{t}}(\mu,\lambda) - \hat{x}^{\bar{t}}(\mu,\lambda))$ such that for some $s > \bar{t}$ the solution wherein we move a positive amount of $\epsilon$ from $\hat{x}^s(\mu,\lambda)$ to $\hat{x}^{\bar{t}}(\mu,\lambda)$, i.e., the solution $\bar{v} := (\bar{v}^t)_{t \in \mathcal{T}}$ given by
\begin{equation*}
\bar{v}^t = \left\{
\begin{array}{ll}
\hat{x}^{\bar{t}}(\mu,\lambda) + \epsilon & \text{if } t = \bar{t}, \\
\hat{x}^{s}(\mu,\lambda) - \epsilon & \text{if } t = s, \\
\hat{x}^t(\mu,\lambda) & \text{otherwise,}
\end{array}
\right.
\end{equation*}
is feasible. Since $\sum_{t \in \mathcal{T}} \bar{v}^t) = r(\mathcal{T})$, this implies that there is at least one inequality constraint that is violated by $\bar{v}$. Since $\epsilon$ may be arbitrary close to $0$, it follows for the online solution $\hat{x}(\mu,\lambda)$ that there is at least one inequality constraint involving $\hat{x}^{\bar{t}}(\mu,\lambda)$ but not involving $\hat{x}^s (\mu,\lambda)$ that is tight in this online solution. In other words, for each $s > \bar{t}$, there is a subset $\hat{\mathcal{V}}^s \subset \mathcal{T}$ such that $\bar{t} \in \hat{\mathcal{V}}^s$, $s \not\in \hat{\mathcal{V}}^s$, and $\sum_{t \in \hat{\mathcal{V}}^s} \hat{x}^t (\mu,\lambda) = r(\hat{\mathcal{V}}^s)$. We denote the intersection of these sets by $\hat{\mathcal{V}}$, i.e., $\hat{\mathcal{V}} := \cap_{s > \bar{t}} \hat{\mathcal{V}}^s$. Note that $\hat{\mathcal{V}}$ is not empty since each set $\hat{\mathcal{V}}^s$ contains $\bar{t}$.

We claim that the set $\hat{\mathcal{V}}$ satisfies all properties of $\bar{\mathcal{S}}^{\bar{t}}$ that are required by the lemma, which implies that such a set exists. First, we have that $\bar{t} \in \hat{\mathcal{V}}$ since each subset $\hat{\mathcal{V}}^s$ contains $\bar{t}$. Second, since for each $s > \bar{t}$ the subset $\hat{\mathcal{V}}^s$ does not contain $s$, the intersection $\hat{\mathcal{V}}$ cannot contain any indices exceeding $\bar{t}$. Third, since for each $s > \bar{t}$ the constraint corresponding to $\hat{\mathcal{V}}^s$ is tight in the online solution $\hat{x}(\mu,\lambda)$, Lemma~\ref{lemma_sub} implies that also the constraint corresponding to $\hat{\mathcal{V}}$ is tight in $\hat{x}(\mu,\lambda)$.
\end{proof}

\subsection{Proof of Lemma~\ref{lemma_prop_2}}
\label{app_lemma_prop_2}

\begin{lemma_prop_2}
For any instance in $I_{\text{sub}}$ and given any Lagrange multipliers $(\mu,\lambda)$ such that $(\mu,\lambda) \leq (\mu^*,\lambda^*)$, it holds that $\hat{x}^t (\mu,\lambda) \leq x^t(\mu^*,\lambda^*)$ for all $t \in \mathcal{T}$.
\end{lemma_prop_2}

\begin{proof}
Consider any Lagrange multipliers $(\mu,\lambda)$ such that $(\mu,\lambda) \leq (\mu^*,\lambda^*)$. We prove the lemma by induction on the stage index~$\bar{t}$. First, we consider the case $\bar{t} = 1$. Observe that, when defining the rank function $r$, we can assume without loss of generality that $r(\lbrace 1 \rbrace)$ is the maximum value of $x^1$ in any feasible solution $x$ in $\mathcal{F}$. As a consequence, the local Lagrangian solution $x^1(\mu,\lambda)$ cannot lead to any future constraint violation. Hence, $\hat{x}^1(\mu,\lambda) = x^1(\mu,\lambda)$, which proves the case $\bar{t} = 1$.

Second, suppose that $\hat{x}^s(\mu,\lambda) \leq x^s(\mu,\lambda)$ for all $s$ smaller than some $\bar{t} \in \mathcal{T}$ with $\bar{t} > 1$. We prove that this implies $\hat{x}^{\bar{t}}(\mu,\lambda) \leq x^{\bar{t}} (\mu,\lambda)$ by contradiction. For this, suppose that $\hat{x}^{\bar{t}}(\mu,\lambda) > x^{\bar{t}}(\mu,\lambda)$. It follows from an argument analogous to the first part of the proof of Lemma~\ref{lemma_tight} in Appendix~\ref{app_lemma_tight} that there is no $\epsilon$ with $0 < \epsilon < \hat{x}^{\bar{t}}(\mu,\lambda) - x^{\bar{t}}(\mu,\lambda)$ such that for some $s > \bar{t}$ the solution wherein we move an amount of $\epsilon$ from $\hat{x}^{\bar{t}}(\mu,\lambda)$ to $\hat{x}^{s}(\mu,\lambda)$, i.e., the solution $\bar{w} := (\bar{w}^t)_{t \in \mathcal{T}}$ given by
\begin{equation*}
\bar{w}^t = \left\{
\begin{array}{ll}
\hat{x}^{\bar{t}}(\mu,\lambda) - \epsilon & \text{if } t = \bar{t}, \\
\hat{x}^{s}(\mu,\lambda) + \epsilon & \text{if } t = s, \\
\hat{x}^t(\mu,\lambda) & \text{otherwise,}
\end{array}
\right.
\end{equation*}
is infeasible. Since $\sum_{t \in \mathcal{T}} \bar{w}^t = r(\mathcal{T})$, this implies for the online solution $\hat{x}(\mu,\lambda)$ that there must be at least one inequality constraint involving $\hat{x}^s(\mu,\lambda)$ but not involving $\hat{x}^{\bar{t}}(\mu,\lambda)$ that is tight in this online solution. In other words, for each $s > \bar{t}$, there exists a subset $\hat{\mathcal{S}}^s \subset \mathcal{T}$ such that $s \in \hat{\mathcal{S}}^s$, $\bar{t} \not\in \hat{\mathcal{S}}^s$, and $\sum_{t \in \hat{\mathcal{S}}^s} \hat{x}^t(\mu,\lambda) = r(\hat{\mathcal{S}}^s)$. 

Let $\bar{\mathcal{T}}$ denote the set of stages whose local Lagrangian and online solution are not the same, i.e., $\bar{\mathcal{T}} := \lbrace t \in \mathcal{T} \ | \ \hat{x}^t(\mu,\lambda) \neq x^t(\mu,\lambda) \rbrace$. By the induction hypothesis and Lemma~\ref{lemma_tight}, it follows that for each $s \in \bar{\mathcal{T}}$ with $s < \bar{t}$ there exists a set $\bar{\mathcal{S}}^{s} \subseteq \lbrace 1,\ldots,s \rbrace$ with $s \in \bar{\mathcal{S}}^{s}$ such that $\sum_{s \in \bar{\mathcal{S}}} \hat{x}^s (\mu,\lambda) = r(\bar{\mathcal{S}}^{s})$. Thus, by Lemma~\ref{lemma_sub}, also the inequality constraint corresponding to the set
\begin{equation*}
\mathcal{S}' := \left( \bigcup_{s \in \bar{\mathcal{T}}, s< \bar{t}} \bar{\mathcal{S}}^{s} \right) \cup  \left( \bigcup_{s > \bar{t}} \hat{\mathcal{S}}^s \right)
\end{equation*}
is tight in the online solution $\hat{x}(\mu,\lambda)$, i.e., $\sum_{t \in \mathcal{S}'} \hat{x}^{t}(\mu,\lambda) = r(\mathcal{S}')$. Note, that 
\begin{itemize}
\item 
$\lbrace \bar{t} + 1,\ldots, T \rbrace \subseteq \mathcal{S}'$ since for each $s > \bar{t}$ the set $\hat{\mathcal{S}}^s$ contains~$s$;
\item
$\bar{t} \not\in \mathcal{S}'$ since for each~$s \in \bar{\mathcal{T}}$ with $s < \bar{t}$ we have $\bar{\mathcal{S}}^s \subseteq \lbrace 1,\ldots,s \rbrace$ and for each $s > \bar{t}$ we have by definition that $\bar{t} \not\in \hat{\mathcal{S}}^s$;
\item
$s \in \mathcal{S}'$ for all $s \in \bar{\mathcal{T}}$ with $s < \bar{t}$ since each set $\bar{\mathcal{S}}^{s}$ contains~$s$.
\end{itemize}
It follows that $(\mathcal{T} \backslash \mathcal{S}') \subseteq \lbrace 1,\ldots,\bar{t} \rbrace$, $\bar{t} \in (\mathcal{T} \backslash \mathcal{S}')$, and $(\mathcal{T} \backslash \mathcal{S}') \cap \bar{\mathcal{T}} = \emptyset$. We use this information to derive the following inequality:
\begin{subequations}
\begin{align}
r(\mathcal{T}) - r(\mathcal{S}')
&= \sum_{t \in \mathcal{T} \backslash \mathcal{S}'} \hat{x}^t(\mu,\lambda) \label{eq_sub_01} \\
&=
\sum_{t \in \mathcal{T} \backslash \mathcal{S}', t \neq \bar{t}} x^t(\mu,\lambda) + \hat{x}^{\bar{t}}(\mu,\lambda)  \label{eq_sub_02} \\
&\geq
\sum_{t \in \mathcal{T} \backslash \mathcal{S}', t \neq \bar{t}} x^t(\mu^*,\lambda^*) + \hat{x}^{\bar{t}}(\mu,\lambda)  \label{eq_sub_03} \\
&=
r(\mathcal{T}) - \sum_{t \in \mathcal{S}'} x^t(\mu^*,\lambda^*) - x^{\bar{t}}(\mu^*,\lambda^*) + \hat{x}^{\bar{t}}(\mu,\lambda). \label{eq_sub_04} \\
&\geq r(\mathcal{T}) - r(\mathcal{S}') - x^{\bar{t}}(\mu^*,\lambda^*) + \hat{x}^{\bar{t}}(\mu,\lambda).  \label{eq_sub_05}
\end{align} \label{eq_sub}%
\end{subequations}
Here,~(\ref{eq_sub_01}) follows since the constraints corresponding to $\mathcal{T}$ and $\mathcal{S}'$ are tight in $\hat{x}(\mu,\lambda)$, i.e., $\sum_{t \in \mathcal{T}} \hat{x}^t(\mu,\lambda) = r(\mathcal{T})$ and $\sum_{t \in \mathcal{S}'} \hat{x}^t(\mu,\lambda) = r(\mathcal{S}')$,~(\ref{eq_sub_02}) follows since $t \not\in \bar{\mathcal{T}}$ for each $t \in \mathcal{T} \backslash \mathcal{S}'$ with $t < \bar{t}$,~(\ref{eq_sub_03}) follows by Lemma~\ref{lemma_instance}, and~(\ref{eq_sub_04}) and~(\ref{eq_sub_05}) follow due to feasibility of the optimal solution $x(\mu^*,\lambda^*)$ for the submodular constraints. The inequality derived in~(\ref{eq_sub}) implies that $x^{\bar{t}}(\mu^*,\lambda^*) \geq \hat{x}^{\bar{t}}(\mu,\lambda)$. However, since we assumed that $\hat{x}^{\bar{t}}(\mu,\lambda) > x^t(\mu,\lambda)$, this implies that $x^{\bar{t}}(\mu^*,\lambda^*) > x^{\bar{t}}(\mu,\lambda)$, which is a contradiction with Lemma~\ref{lemma_instance}. Thus, we must have that $\hat{x}^{\bar{t}}(\mu,\lambda) \leq x^{\bar{t}}(\mu,\lambda)$.
\end{proof}

\section{Computing the projection sets $\mathcal{C}^t_{\text{proj}}$ for Problem~BATTERY}
\label{app_battery}

Suppose that we need to compute the online decision $x^{\bar{t}}(\mu,\lambda)$ at the start of the interval $\bar{t}$. To compute the projection set $\mathcal{C}^{\bar{t}}_{\text{proj}}$, we iteratively eliminate the variables $x^T,x^{T-1},\ldots,x^{\bar{t} + 1}$ from the constraints~(\ref{battery_02})-(\ref{battery_04}). First, we eliminate $x^T$. This yields the following inequalities on the remaining non-fixed variables $x^{\bar{t}}, \ldots,x^{T-1}$:
\begin{align*}
& \underline{C}^t \leq \Delta t \sum_{s=1}^{\bar{t}-1} \hat{x}^s(\mu,\lambda) + \Delta t  \sum_{s = \bar{t}}^{t} x^s \leq \bar{C}^t, \quad t \in \lbrace \bar{t},\ldots,T-1 \rbrace, \\
& C - u^T \leq \Delta t  \sum_{s=1}^{\bar{t} - 1} \hat{x}^s(\mu,\lambda) + \Delta t  \sum_{s=\bar{t}}^{T-1} x^s \leq C - l^T, \\
& l^t \leq x^t \leq u^t, \quad t \in \lbrace \bar{t},\ldots, T-1 \rbrace.
\end{align*}
Observe that there are now two lower and upper bound constraints on the sum $ \Delta t \sum_{s=1}^{T-1} x^s$. We merge them by defining $\hat{\underline{C}}^{T-1} := \max(\underline{C}^{T-1} , C - u^T)$ and $\hat{\bar{C}}^{T-1} := \min (\bar{C}^{T-1} , C - l^T)$. Thus, we obtain
\begin{align*}
& \underline{C}^t \leq \Delta t  \sum_{s=1}^{\bar{t} - 1} \hat{x}^s(\mu,\lambda) + \Delta t  \sum_{s=\bar{t}}^{t} x^s \leq \bar{C}^t, \quad t \in \lbrace \bar{t} ,\ldots,T-2 \rbrace , \\
& \hat{\underline{C}}^{T-1} \leq \Delta t  \sum_{s=1}^{\bar{t}-1} \hat{x}^s(\mu,\lambda) + \Delta t  \sum_{s=\bar{t}}^{T-1} x^s \leq \hat{\bar{C}}^{T-1}, \\
& l^t \leq x^t \leq u^t, \quad t \in \lbrace \bar{t}, \ldots, T-1 \rbrace.
\end{align*}
Now we eliminate the next variable $x^{T-1}$. For this, we can apply the same reasoning as in the elimination of $x^T$. This yields the following inequalities:
\begin{align*}
& \underline{C}^t \leq \Delta t  \sum_{s=1}^{\bar{t} - 1} \hat{x}^s(\mu,\lambda) + \Delta t  \sum_{s=\bar{t}}^{t} x^s \leq \bar{C}^t, \quad t \in \lbrace \bar{t} ,\ldots,T-3 \rbrace, \\
& \hat{\underline{C}}^{T-2} \leq \Delta t  \sum_{s=1}^{\bar{t}-1} \hat{x}^s(\mu,\lambda) + \Delta t  \sum_{s=\bar{t}}^{T-2} x^s \leq \hat{\bar{C}}^{T-2}, \\
& l^t \leq x^t \leq u^t, \quad t \in \lbrace \bar{t}, \ldots, T-2 \rbrace,
\end{align*}
where $\hat{\underline{C}}^{T-2} := \max(\underline{C}^{T-2}, \hat{\underline{C}}^{T-1} - u^{T-1})$ and $\hat{\bar{C}}^{T-2} := \min(\bar{C}^{T-2}, \hat{\bar{C}}^{T-1} - l^{T-1})$. We can continue this process until all variables $x^t$ with $t > \bar{t}$ have been eliminated. The only remaining constraint is then
\begin{equation*}
\max \left(\hat{\underline{C}}^{\bar{t}} -  \Delta t \sum_{s=1}^{\bar{t} -1} \hat{x}^s(\mu,\lambda) , l^{\bar{t}} \right)
\leq x^{\bar{t}}
\leq
\min \left(\hat{\bar{C}}^{\bar{t}} -  \Delta t \sum_{s=1}^{\bar{t} - 1} \hat{x}^s(\mu,\lambda), u^{\bar{t}} \right),
\end{equation*}
where $\hat{\underline{C}}^{t} := \max(\underline{C}^{t}, \hat{\underline{C}}^{t+1} - u^{t+1})$ and $\hat{\bar{C}}^{t} := \min(\bar{C}^{t}, \hat{\bar{C}}^{t+1} - l^{t+1})$ for $t \in \lbrace 1,\ldots,T-1 \rbrace$. Thus, the projected problem is the minimization of a quadratic function over a closed interval whose boundaries can be computed efficiently. Moreover, observe that we can compute the entire vectors $\hat{\underline{C}}$ and $\hat{\bar{C}}$ in $O(T)$ time and at the start of the first interval since they do not depend on any of the online decisions $x^t (\mu,\lambda)$.

\bibliographystyle{abbrv}
\bibliography{ODDO_library}

\end{document}